\documentclass[10pt]{article}

\usepackage[margin=1.05in]{geometry}
\usepackage{amsmath,amssymb,amsthm,mathtools}
\usepackage{authblk}
\usepackage{enumitem}
\usepackage{booktabs}
\usepackage{longtable}
\usepackage{array}
\usepackage{hyperref}
\hypersetup{colorlinks=true,linkcolor=blue,citecolor=blue,urlcolor=blue}
\setlist{topsep=0.35em,itemsep=0.15em,parsep=0pt}

\newtheorem{theorem}{Theorem}[section]
\newtheorem{lemma}[theorem]{Lemma}

\theoremstyle{definition}
\newtheorem{definition}[theorem]{Definition}
\newtheorem{remark}[theorem]{Remark}

\newcommand{\calF}{\mathcal F}
\newcommand{\calU}{\mathcal U}

\newcommand{\calI}{\mathcal I}
\newcommand{\calA}{\mathcal A}

\newcommand{\calH}{\mathcal H}
\newcommand{\calW}{\mathcal W}
\newcommand{\calS}{\mathcal S}

\newcommand{\agr}{\operatorname{agr}}

\begin{document}

\title{{\Large\bf  Non-trivial Intersection Problems for Multi-partite Hypergraphs\thanks{Research  supported by National Key R$\&$D Program of China (Grant No. 2023YFA1010202).}}}
\date{\today}
\author[1]{Jianfeng Hou\thanks{ Email: \texttt{jfhou@fzu.edu.cn}}}
\author[1]{Caiyun Hu \thanks{Email: \texttt{hucaiyun.fzu@gmail.com}}}  
\affil[1]{Center for Discrete Mathematics,
            Fuzhou University, Fujian, 350003, China}
\maketitle

\begin{abstract}

We study non-trivial intersection problems for multi-partite hypergraphs, excluding the usual extremal examples determined by fixed vertices or fixed coordinates.  Our first result determines the exact value of the non-trivial $t$-intersection problem in the symmetric product $[n]^r$ for $1\le t\le r-2$ and all $n\ge2$.  Frankl and Nie proved a two-candidate formula for sufficiently large $n$ and conjectured it for all $n\ge 2$; our formula shows that the conjectured expression must be enlarged, in small ranges of $n$, by additional ball-type terms arising from the Frankl families.

Our second result concerns intersecting families in general products $X_1\times\cdots\times X_r$, where $|X_i|=n_i$, with no common vertex.  Let $m_0(1,n_1,\ldots,n_r)$ denote the largest size of such a family.  We show that this number is equal to the maximum of $\sum_{X\in \mathcal{D}}\prod_{i\in X}(n_i-1)$ over all downsets $\mathcal{D}\subseteq 2^{[r]}$ such that $\bigcup_{X\in \mathcal{D}}X=[r]$ and no two members of $\mathcal{D}$ have union $[r]$.  This finite reduction separates the intersection obstruction from the part sizes and yields explicit fully asymmetric formulas for $r=4,5,6$.
\end{abstract}

\textbf{Keywords:} Non-trivial, intersection, multi-partite hypergraph, shifting.

\section{Introduction}

Throughout $[r]=\{1,\ldots,r\}$.  Let $r\ge2$ and let $X_1,\ldots,X_r$ be finite sets.   An \emph{$r$-partite $r$-graph} with parts $X_1,\ldots,X_r$ will be identified with a family
\[
   \calF\subseteq X_1\times\cdots\times X_r
   =\{(x_1,\ldots,x_r):x_i\in X_i,\ 1\le i\le r\}.
\]
The \emph{fixed-coordinate set} of $\mathcal{F}$ is defined as 
\[
   \bigcap \calF
   :=
   \{i\in [r]: \text{all sequences in } \calF \text{ have the same entry in the } i\text{-th coordinate}\}.
\]
For two vectors $\mathbf{x}=(x_1,\ldots,x_r)$ and $\mathbf{y}=(y_1,\ldots,y_r)$ in the same product, let 
\[
   \agr(\mathbf{x},\mathbf{y})
   :=
   \{i\in[r]:x_i=y_i\}.
\]
Thus $\mathbf{x}$ and $\mathbf{y}$ are disjoint as hypergraph edges if and only if $\agr(\mathbf{x},\mathbf{y})=\emptyset$.  A family $\calF\subseteq[n]^r$ is \emph{$t$-intersecting} if $|\agr(\mathbf{x},\mathbf{y})|\ge t$ for all $\mathbf{x},\mathbf{y}\in\calF$; when $t=1$ we simply say intersecting.  A $t$-intersecting family $\calF$ is \emph{non-trivial} if $|\bigcap\calF|<t$.



\subsection{\texorpdfstring{Non-trivial intersection in $r$-partite $r$-graphs}{Non-trivial intersection in r-partite r-graphs}}

The classical Erd\H{o}s--Ko--Rado theorem~\cite{EKR1961} states that if $\mathcal{F}\subseteq\binom{[n]}{r}$ is intersecting and $n\ge 2r$, then $|\mathcal{F}|\le\binom{n-1}{r-1}$. The extremal families are the trivial stars, consisting of all $r$-sets containing a fixed vertex. This naturally leads to the corresponding non-trivial problem, in which such stars are excluded. Hilton and Milner~\cite{HiltonMilner1967} resolved this question by showing that any non-trivial intersecting family $\mathcal{F}\subseteq\binom{[n]}{r}$ with $n>2r$ satisfies
\[
|\mathcal{F}|
   \le
   \binom{n-1}{r-1}-\binom{n-r-1}{r-1}+1.
\]
The bound is sharp: The extremal family is called the Hilton--Milner
family
\[
   \mathcal H_{x,B}(n)
   =
   \{B\}
   \cup
   \left\{
      F\in\binom{[n]}{r}:
      x\in F,\ F\cap B\neq\emptyset
   \right\},~\text{ where }~ x\in[n] ~\text{and}~B\in\binom{[n]\setminus\{x\}}{r}.
\]

The Hilton--Milner theorem, whose short proof was later given by Frankl and F\"uredi~\cite{FranklFuredi1986},  initiated a line of work in which the common core of an intersecting family is controlled or excluded.  For $t>1$, the corresponding non-trivial problem asks for the largest $t$-intersecting family. 
Frankl \cite{FranklIntersecting1978} determined this maximum for sufficiently large \(n\). The extremal family is
\[
   \mathcal A_1(n,r,t)
   =
   \left\{
      F\in \binom{[n]}{r}: |F\cap [t+2]|\ge t+1
   \right\}
\]
when \(t+1\le r\le 2t+1\), while for \(r>2t+1\) it is the
Hilton--Milner type family
\[
   \mathcal H_{r,t}(n)
   = \left\{
      [r+1]\setminus\{i\}: i\in [r]
   \right\} \cup 
   \left\{
      F\in \binom{[n]}{r}: [t]\subseteq F,\,
      F\cap [t+1,r+1]\neq \emptyset
   \right\},
\]
where $[t+1, r+1] = \{t+1, t+2, \dots, r+1\}$. 
More generally, Frankl \cite{Frankl1978} introduced the families
\[
   \mathcal A_i(n,r,t)
   =
   \left\{
      F\in \binom{[n]}{r}: |F\cap [t+2i]|\ge t+i
   \right\},~1\le i\le \left\lfloor\frac{r-t}{2}\right\rfloor.
\]
and proposed that these families govern the complete t-intersection problem.
Ahlswede and Khachatrian \cite{AK1996} later proved the complete non-trivial intersection theorem, determining the maximum for all relevant $n,r,t$.
Thus the extremal construction is not merely the Hilton–Milner construction with $t$ fixed coordinates; depending on the parameter range, ball-type Frankl families also appear.

Turning to multi-partite versions, Kwan, Sudakov, and Vieira~\cite{KwanSudakovVieira2018} studied non-trivially intersecting multi-partite families, where the ground set is partitioned into parts and each member has a prescribed intersection size with each part. The extremal examples are multi-partite Hilton--Milner-type families, constructed by choosing a distinguished part together with possible auxiliary parts.

Our first problem concerns $t$-intersecting families in $r$-partite $r$-graphs. 
For positive integers $n,t, r$ with $n\ge r>t$, define
\[
   \iota_0(t,n;r):=
   \max\{|\calF|:\calF\subseteq[n]^r\text{ is non-trivial and }t\text{-intersecting}\}.
\]
Frankl and Nie \cite{FranklNie2026} determined this maximum for sufficiently large $n$, proving that
\[
    \iota_0(t,n;r)
    =
    \max\left\{
        n^{r-t}-(n-1)^{r-t}+t(n-1),
        (t+2)n^{r-t-1}-(t+1)n^{r-t-2}
    \right\},
\]
and conjectured that the same two-candidate formula holds for every $n\ge2$.  We show that this conjecture is asymptotically correct but fails for certain values of $n$: in small parameter ranges, additional ball-type examples arising from the Frankl families may achieve larger size.

\begin{theorem}\label{thm:iota-symmetric}
For integers $r\ge3$, $1\le t\le r-2$, $n\ge 2$ and $q:=r-t$, we have 
\[
   \iota_0(t,n;r)
   =
   \max\left\{
      W_{r,t}(n),\,
      A_i(t,n;r):1\le i\le\left\lfloor\frac q2\right\rfloor
   \right\},
\]
where
\[
   W_{r,t}(n):=n^q-(n-1)^q+t(n-1),
\]
and
\[
   A_i(t,n;r):=
   n^{q-2i}\sum_{j=0}^i\binom{t+2i}{j}(n-1)^j.
\]
\end{theorem}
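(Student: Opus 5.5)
The plan has three stages: exhibit constructions for the lower bound, normalize an extremal family by shifting, and reduce the shifted problem to a weighted non-trivial intersection problem on subsets of $[r]$.

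\emph{Lower bound.} Two constructions suffice. For $W_{r,t}(n)$, fix the point $\mathbf{a}=(1,\ldots,1)$ and take the family of all $\mathbf{x}$ with $x_1=\cdots=x_t=1$ that agree with $\mathbf{a}$ in at least one of the last $q$ coordinates. This has $n^q-(n-1)^q$ members. Add the $t(n-1)$ vectors obtained from $\mathbf{a}$ by changing exactly one of the first $t$ coordinates to a value in $\{2,\ldots,n\}$. One checks that the family is $t$-intersecting and has no fixed coordinate set of size $t$, since $q\ge2$. For $A_i(t,n;r)$, take the ball-type family of all $\mathbf{x}$ that agree with $\mathbf{a}$ in at least $t+i$ of the first $t+2i$ coordinates, with the remaining $q-2i$ coordinates free. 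Its size is $n^{q-2i}\sum_{j\le i}\binom{t+2i}{j}(n-1)^j$. Two such agreement sets inside $[t+2i]$, each of size at least $t+i$, meet in at least $t$ coordinates, and no coordinate is fixed.

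\emph{Upper bound, normalization.} I would use coordinatewise shifting towards the value $1$, applied to non-trivial families in the usual way of Frankl and Hilton--Milner-type arguments. Shifting preserves $t$-intersection. If a shift would create $t$ fixed coordinates, we either stop or use the standard trick: show directly that a family one step from being trivial already satisfies the $W$ bound. After shifting, membership of $\mathbf{x}$ depends only on its support $S(\mathbf{x})=\{i:x_i=1\}$, and the resulting family of supports $\calS\subseteq2^{[r]}$ is an up-set. The $t$-intersection of the original family becomes the condition $|S\cap T|\ge t$ for all $S,T\in\calS$, because two vectors with supports $S,T$ can be chosen to disagree off $S\cap T$ when $n\ge3$. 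The case $n=2$ needs separate care: there the condition is $|S\cap T|+|\overline S\cap\overline T|\ge t$, which I would handle by the same weighted scheme on the symmetric form. We have $|\calF|=\sum_{S\in\calS}(n-1)^{r-|S|}$, so the problem becomes a weighted non-trivial $t$-intersection problem for up-sets in $2^{[r]}$, with the $p$-biased weight $p=1/n$ up to normalization.

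\emph{Main step.} This is the biased-measure non-trivial Ahlswede--Khachatrian theorem, and I expect it to be the main obstacle. After further shifting within $2^{[r]}$ (left-compression), the Ahlswede--Khachatrian pushing-pulling method, or the generating-set method, shows that an optimal non-trivial up-set is either of Hilton--Milner type, whose measure is $W_{r,t}(n)$, or one of the Frankl families $\{S:|S\cap[t+2i]|\ge t+i\}$, whose measure is $A_i$. I would first try to import Filmus's $\mu_p$ version of the complete intersection theorem and its non-trivial analogue. The obstacles are that $p=1/n$ may exceed $1/(t+1)$ for small $n$, and that non-triviality must be tracked through the compressions. Where non-triviality is lost, I would argue as Frankl does: the last non-trivial family before the loss contains a specific Hilton--Milner configuration and is bounded by $W_{r,t}(n)$. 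Finally, comparing the candidates yields the stated maximum.
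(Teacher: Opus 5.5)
Your outline has the same architecture as the paper's proof. Both use the two constructions, then shifting with a separate bound for families where some shift would destroy non-triviality. Both then pass to the up-set of supports in $2^{[r]}$ with weight $(n-1)^{r-|S|}=n^r p^{|S|}(1-p)^{r-|S|}$ at $p=1/n$, and finish with a weighted non-trivial Ahlswede--Khachatrian theorem. The step you flag as the main obstacle is not reproved in the paper. It is imported as the Bey--Engel theorem (Theorem~\ref{thm:BE}), valid for every $0<p\le 1/2$, hence for $p=1/n$ with any $n\ge 2$. So no pushing--pulling is needed, and no tracking of non-triviality through compressions in $2^{[r]}$. Your worry about $p>1/(t+1)$ also points the wrong way. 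In that range $\mu_p(\calS_1)\ge p^t$, so the $t$-star term in the complete $\mu_p$ intersection theorem is dominated, and that theorem alone gives the bound. Only $n>t+1$ needs the non-trivial version.

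Two points in your reduction need repair.

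First, shifts only move values to $1$, so a shifted family need not be determined by its supports. Your derivation of $|S\cap T|\ge t$ from two vectors disagreeing off $S\cap T$ presupposes exactly that saturation. Use closure under changing entries to $1$ instead. For $\mathbf x,\mathbf y\in\calF$, set to $1$ the coordinates with $x_i=y_i\ne 1$. The resulting vector lies in $\calF$ and agrees with $\mathbf y$ exactly on $P(\mathbf x)\cap P(\mathbf y)$. This is Lemma~\ref{lem:FN-projection}, and it works for every $n\ge 2$. So your separate $n=2$ treatment is unnecessary; moreover, its condition $|S\cap T|+|\overline S\cap\overline T|\ge t$ is not one the weighted theorem applies to. Only the inequality $|\calF|\le\sum_{S}(n-1)^{r-|S|}$ is needed, not equality.

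Second, for blocked families the paper cites Lemma~\ref{lem:FN-resistant}. That gives only $A_1(t,n;r)$, which suffices because $A_1$ is a candidate. Your stronger bound $W_{r,t}(n)$ is in fact true, but you give no argument. One route: if $S^{(\ell)}_j(\calF)$ is trivial, then $\calF$ has exactly $t-1$ fixed coordinates and takes only the values $1,j$ in coordinate $\ell$. Its two halves, restricted to the remaining $q$ coordinates, are non-empty cross-intersecting families in $[n]^q$. Their sizes sum to at most $n^q-(n-1)^q+1\le W_{r,t}(n)$. To see this, shift both halves simultaneously and pass to cross-intersecting up-sets $\mathcal U,\mathcal V\subseteq 2^{[q]}$. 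Then prove $\mu_p(\mathcal U)+\mu_p(\mathcal V)\le 1+p^q-(1-p)^q$ for $p\le 1/2$ by induction on $q$. This makes the blocked case self-contained at the cost of one extra lemma.
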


In Theorem \ref{thm:iota-symmetric}, the term $W_{r,t}(n)$ corresponds to a multi-partite Hilton--Milner construction, while $A_i(t,n;r)$ is the product analogue of the $i$-th Frankl family
For fixed $r$ and $t$,  two terms $W_{r,t}(n)$ and $A_1(t,n;r)$ have the largest possible degree, namely $r-t-1$, whereas $A_i(t,n;r)$ has degree at most $r-t-2$ for every $i\ge2$.  Hence the formula reduces to the Frankl--Nie two-candidate maximum when $n$ is sufficiently large, but for small $n$ the lower-degree ball terms may dominate.

\subsection{Hilton--Milner case in general multi-partite products}

We now turn to a different non-trivial condition, formulated in terms of matching and covering numbers.  For a family $\mathcal F\subseteq X_1\times\cdots\times X_r$, let $\nu(\mathcal F)$ be the maximum number of pairwise disjoint edges in $\mathcal F$, and let $\tau(\mathcal F)$ be the minimum size of a vertex set meeting every edge.  For $r=2$, K\"{o}nig's theorem \cite{DG1931} gives $\nu(\mathcal F)=\tau(\mathcal F)$; for $r\ge3$ the two parameters differ in an essential way.

Our second result deals with the Hilton--Milner case in general multi-partite products.  Given $\mathcal F\subseteq X_1\times\cdots\times X_r$, $\mathcal F$ is intersecting means $\nu(\mathcal F)\le1$.  Meyer \cite{Meyer1974} proved that every intersecting subhypergraph of $[n]^r$ has at most $n^{r-1}$ edges.  
Deza and Frankl \cite{DezaFrankl1983} proved the asymmetric version: if $n_1\ge n_2\ge\cdots\ge n_r$, then every intersecting subhypergraph of $[n_1]\times\cdots\times[n_r]$ has at most $\prod_{\ell=1}^{r-1}n_\ell$ edges.  
More generally, Frankl \cite{Frankl2012} showed that $\nu(\mathcal F)\le s$ implies $|\mathcal F|\le s\prod_{\ell=1}^{r-1}n_\ell$ in the same ordered setting.  Since these extremal examples have small covering number, the natural non-trivial condition is $\tau(\mathcal F)>s$; here we treat the case $s=1$.

Following Frankl and Nie \cite{FranklNie2026}, for $n_i=|X_i|$ we define $m_0(1,n_1,\ldots,n_r)$ to be the maximum size of a family $\mathcal F\subseteq X_1\times\cdots\times X_r$ satisfying
\[
   \nu(\mathcal F)\le1<\tau(\mathcal F).
\]
Equivalently, $\mathcal F$ is intersecting but has no common vertex.  This agrees with $\iota_0(1,n;r)$ in the symmetric case $n_1=\cdots=n_r=n$, although for $t>1$ the condition $|\bigcap\mathcal F|<t$ differs from the covering condition $\tau(\mathcal F)>t$.
Frankl and Nie \cite{FranklNie2026} studied this problem and proved, among other results, the symmetric formula
\[
    m_0(1,n,\ldots,n)=n^{r-1}-(n-1)^{r-1}+n-1
\]
for all $r\ge 3$ and $n\ge 2$, and the asymmetric three-part formula
\[
    m_0(1,n_1,n_2,n_3)=n_1+n_2+n_3-2.
\]
The symmetric formula above is the case $s=1$ of a conjecture of Lu and Ma \cite{LuMa2024}, which states that, for sufficiently large $n$, $r\ge 4$ and $s<n$,
\[
m_0(s,n,\ldots,n)=s n^{r-1}-(n-1)^{r-1}+n-s,
\]
where $m_0(s,n,\ldots,n)$ denotes the analogous maximum under $\nu(F)\le s<\tau(F)$.
Frankl and Nie \cite{FranklNie2026} later confirmed this conjecture for sufficiently large $n$, and also pointed out that, already for four parts, no single asymmetric expression governs all parameter ranges. We give a finite exact reduction which accounts for this dependence on the part sizes.

Let $a_i:=n_i-1$ for $i\in[r]$. For a subset $X\subseteq[r]$, set 
\[
   a_X:=\prod_{i\in X}a_i,  \text{ and } 
   \qquad a_\emptyset:=1.
\]

A family $\mathcal{D}\subseteq 2^{[r]}$ is called \emph{available} if it satisfies the following  
\begin{enumerate}[label=(D\arabic*)]
   \item $\mathcal{D}$ is a downset: if $X\in \mathcal{D}$ and $Y\subseteq X$, then $Y\in \mathcal{D}$;
   \item $\bigcup_{X\in \mathcal{D}}X=[r]$;
   \item $X\cup Y\ne[r]$ for all $X,Y\in \mathcal{D}$.
\end{enumerate}
Let $\mathfrak D_r$ be the collection of all available families  $\mathcal{D}\subseteq 2^{[r]}$.

\begin{theorem}
\label{thm:downset}
For every $r\ge3$ and $n_1,\ldots,n_r\ge2$,
\[
   m_0(1,n_1,\ldots,n_r)
   =
   \max_{\mathcal{D}\in\mathfrak D_r}\sum_{X\in \mathcal{D}}a_X.
\]
\end{theorem}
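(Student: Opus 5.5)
The plan is to prove the two inequalities separately: a downset construction gives the lower bound, and a shifting argument that compresses an extremal family into such a construction gives the upper bound.

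\emph{Lower bound.} Identify each $X_i$ with $\{0,1,\ldots,a_i\}$, so that $0$ is a distinguished element in every coordinate. For $\mathbf{x}$ let $\operatorname{supp}(\mathbf{x})=\{i:x_i\ne0\}$. Given $\mathcal D\in\mathfrak D_r$, put $\calF_{\mathcal D}=\{\mathbf{x}:\operatorname{supp}(\mathbf{x})\in\mathcal D\}$. Then $|\calF_{\mathcal D}|=\sum_{X\in\mathcal D}a_X$.
\begin{itemize}
\item $\calF_{\mathcal D}$ is intersecting. If $\mathbf x,\mathbf y\in\calF_{\mathcal D}$ have supports $S,T$, then (D3) gives a coordinate $i\notin S\cup T$, and there $x_i=y_i=0$.
\item $\calF_{\mathcal D}$ has no common vertex. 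Since $\mathcal D$ is a nonempty downset, $\emptyset\in\mathcal D$, so the all-zero vector lies in $\calF_{\mathcal D}$. By (D2), for each $i$ some $X\in\mathcal D$ contains $i$, so some member has a nonzero $i$-th entry.
\end{itemize}
Hence $\nu\le1<\tau$, and $m_0\ge\max_{\mathcal D\in\mathfrak D_r}\sum_{X\in\mathcal D}a_X$.

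\emph{Upper bound via shifting.} Take $\calF$ extremal with $\nu(\calF)\le1<\tau(\calF)$. Apply the standard coordinate shifts $S_{i,b}$, which replace $x_i=b$ by $x_i=0$ whenever the resulting vector is not already in $\calF$. These shifts preserve size and the intersecting property; the usual argument applies because agreement sets only grow when both vectors move to $0$.

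Suppose we reach a family stable under all shifts. Then it is \emph{$0$-closed}: zeroing any coordinate of a member yields a member. Let $\mathcal D=\{\operatorname{supp}(\mathbf x):\mathbf x\in\calF\}$. Closure makes $\mathcal D$ a downset, which is (D1), and trivially $|\calF|\le\sum_{X\in\mathcal D}a_X$.

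For (D3), suppose $\mathbf x,\mathbf y\in\calF$ have supports $S,T$ with $S\cup T=[r]$. Zero the coordinates of $\mathbf y$ in $S$ to obtain $\mathbf y'\in\calF$ with support $T\setminus S$. Now $\mathbf x$ and $\mathbf y'$ disagree on $S$, where $\mathbf x$ is nonzero and $\mathbf y'$ is zero. They also disagree on $T\setminus S=[r]\setminus S$, where the reverse holds. So they are disjoint, which is a contradiction.

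Finally, (D2) at coordinate $i$ says exactly that not all members have $x_i=0$, that is, the shifted family has no common vertex at $(i,0)$.

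\emph{Main obstacle: keeping non-triviality.} A shift $S_{i,b}$ may create a common vertex $(j,0)$, and then (D2) fails. I will shift greedily and skip any shift that would destroy $\tau>1$, which leaves a \emph{stuck} family. In a stuck family, a forbidden shift $S_{j,b}$ must send every member with $x_j\ne0$ to $x_j=0$. So the $j$-th coordinate takes only the two values $\{0,b\}$, and the corresponding members are not already present.

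The first move in that case is to replace $X_j$ by the two-element set $\{0,b\}$. The family then lives in a product with $a_j$ replaced by $1$. In that product the vertex $(j,b)$ and the vertex $(j,0)$ each miss some member, so coordinate $j$ itself certifies non-triviality. We can then freely shift all other coordinates toward $0$ without ever losing $\tau>1$, because the only possible new common vertices lie in coordinates $\ne j$, where the same stuck analysis recurses, each time strictly reducing one $a_\ell$ to $1$.

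After this process terminates we have a $0$-closed family in a product with parameters $a'_\ell\le a_\ell$ satisfying (D1)–(D3). This gives $|\calF|\le\max_{\mathcal D}\sum_{X\in\mathcal D}a'_X\le\max_{\mathcal D}\sum_{X\in\mathcal D}a_X$, since $\sum_{X\in\mathcal D}a_X$ is monotone in each $a_\ell$.

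The delicate point is to check carefully that the two-valued coordinate remains two-valued, and hence a certificate of non-triviality, under the later shifts in other coordinates. If this fails, the fallback is induction on $\sum_\ell a_\ell$, using the two-valued coordinate to split $\calF$ into two cross-intersecting subfamilies on the remaining $r-1$ coordinates.
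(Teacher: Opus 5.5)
You have a genuine gap, and it sits in the step that produces a $0$-closed family. Your lower bound is correct and matches the paper. Your derivation of (D1)--(D3) from a $0$-closed family with no common vertex is also correct. The paper instead gets (D3) from the Frankl--Nie projection lemma and proves saturation from maximality; your zeroing argument together with $|\calF|\le\sum_{X\in\mathcal{D}}a_X$ makes saturation unnecessary.

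Your analysis of a blocked shift is right: $S_{j,b}$ is blocked only if coordinate $j$ takes exactly the values $0,b$ and no member with $x_j=b$ has its zeroed copy in $\calF$. But this configuration is precisely a failure of $0$-closure in coordinate $j$, and your procedure does not repair it:
\begin{itemize}
\item Shrinking $X_j$ to $\{0,b\}$ leaves $S_{j,b}$ blocked.
\item The blocked condition is symmetric in $0$ and $b$, so relabelling does not help.
\item Later shifts in other coordinates might happen to unblock coordinate $j$, but nothing forces this, and the recursion can terminate with blocked coordinates.
\end{itemize}
The point you flag as delicate is automatic, since shifts in coordinates $\ell\ne j$ never change $j$-th entries; the real issue is closure, not two-valuedness.

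So the claim that the process ends in a $0$-closed family is unfounded, and (D1), (D3) can fail for the terminal support family. For instance, in $\{0,1\}^3$ the family $\{(0,0,0),(0,1,1),(1,0,1)\}$ is intersecting with no common vertex, all three shifts are blocked, and no part can be shrunk. Yet its supports $\emptyset,\{2,3\},\{1,3\}$ form neither a downset nor a (D3)-family: $\{2,3\}\cup\{1,3\}=[3]$, because the two members meet at the nonzero vertex $(3,1)$. Your (D3) argument needed to zero $\mathbf{y}$ on $S$, which is exactly what a blocked coordinate forbids. This family is not extremal, but your stuck-case reasoning never uses extremality.

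As written, then, you have no bound for a terminally blocked family. Your fallback is to list coordinate $j$ last and write $\calF=\mathcal{G}_0\times\{0\}\cup\mathcal{G}_b\times\{b\}$, with $\mathcal{G}_0,\mathcal{G}_b$ disjoint and cross-intersecting on the remaining $r-1$ coordinates, and then bound $|\mathcal{G}_0|+|\mathcal{G}_b|$. That is where the real work would lie, and it is not a routine induction: the two pieces are only cross-intersecting, and the bound must be compared with the optimum over $\mathfrak{D}_r$, not $\mathfrak{D}_{r-1}$.

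The paper sidesteps this entirely by citing Theorem~\ref{thm:ksv-shifted} (Kwan--Sudakov--Vieira): some maximum-size non-trivial intersecting family is coordinate-wise shifted. If you replace your greedy/stuck procedure with that citation, applied to a maximum family with your $0$ playing the role of the paper's $1$, the rest of your proof goes through.
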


Theorem~\ref{thm:downset} reduces the multi-partite Hilton--Milner problem to a weighted problem on the Boolean lattice.  Applying this reduction, we determine the fully asymmetric values for $r=4,5,6$.  For $r=4$, the optimization reduces to intersecting graphs on four vertices and is governed by the star--triangle dichotomy.  For $r=5$, weighted intersecting graphs still suffice.  For $r=6$, one must also choose among complementary pairs of triples.  These cases illustrate the increasing complexity of the asymmetric problem.

\subsection{Proof ideas and organization}

The proofs of the two main results use a common shifting philosophy, but in different ways.  In both problems we first replace an extremal family by a shifted one without changing its size or the relevant intersection property.  This makes it possible to record an edge by the set of coordinates in which it differs from a fixed reference edge, and hence to pass from the multi-partite product to a weighted problem on $2^{[r]}$.

For the symmetric $t$-intersection problem, this reduction leads to a weighted non-trivial $t$-intersection problem on the Boolean lattice.  The weights are determined by the product structure: a coordinate in the support contributes a factor $n-1$, while the remaining coordinates contribute freely.  
After this translation, 
the complete non-trivial intersection theorem of Ahlswede and Khachatrian, together with the weighted theorem of Bey and Engel, identifies the relevant extremal shapes: a Hilton–Milner-type family and the Frankl-family balls. Evaluating their weights gives the term 
$W_{r,t}(n)$ and the ball terms $A_i(t,n;r)$ in Theorem~\ref{thm:iota-symmetric}.

For the multi-partite Hilton--Milner problem, the shifted family is encoded more directly by a downset $\mathcal{D}\subseteq2^{[r]}$.  The condition that the family has no common vertex becomes $\bigcup_{X\in \mathcal{D}}X=[r]$, while the intersecting condition becomes the finite obstruction $X\cup Y\ne[r]$ for all $X,Y\in \mathcal{D}$.  Thus the original product problem is converted into the weighted downset optimization in Theorem~\ref{thm:downset}.  The explicit formulas for $r=4,5,6$ are then obtained by solving this finite optimization: graphs suffice for $r=4$ and $r=5$, while for $r=6$ complementary pairs of triples also have to be considered.

The paper is organized as follows.  Section~$2$ contains the shifting and weighted intersection tools.  Section~$3$ proves Theorem~\ref{thm:iota-symmetric}.  Section~$4$ proves Theorem~\ref{thm:downset} and derives the explicit formulas for $r=4,5,6$.

\section{Preliminaries}

In this section, we collect the notation and preliminary results needed for our proofs. We begin by introducing a multi-partite adaptation of the classical shifting operation, due to Kwan, Sudakov, and Vieira~\cite{KwanSudakovVieira2018}.


\begin{definition}(Shifting)
Let $\mathcal{F} \subseteq X_1 \times \cdots \times X_r$ be an $r$-partite $r$-graph, and let $\ell, j$ be integers with  $1\le \ell\le r$ and  $1<j\le |X_\ell|$. 
Define the \emph{$(1\leftarrow j)$-shift under $[n_{\ell}]$} by $S_j^{(\ell)}(\mathcal{F})=\{S_j^{(\ell)}(F):F=(a_1,\ldots,a_\ell,\ldots,a_r)\in\mathcal{F}\}$, where
\[
S_j^{(\ell)}\bigl((a_1,\ldots,a_\ell,\ldots,a_r)\bigr)=
\begin{cases}
F'=(a_1,\ldots,1,\ldots,a_r) & \text{if } a_\ell=j \text{ and } F'\notin\mathcal{F},\\
(a_1,\ldots,a_r) & \text{otherwise.}
\end{cases}
\]
\end{definition}
We say that $\mathcal{F}$ is \emph{$\ell$-shifted} if $S_j^{(\ell)}(\mathcal{F})=\mathcal{F}$ for all $j$. 
Further, call $\mathcal{F}$ \emph{coordinate-wise shifted} if it is $\ell$-shifted for every $1\le \ell\le r$. The following result, given by Kwan, Sudakov and Vieira~\cite{KwanSudakovVieira2018}, shows that shifting operation maintains the intersecting property.

\begin{theorem}[Kwan--Sudakov--Vieira~\cite{KwanSudakovVieira2018}]
\label{thm:ksv-shifted}
Let $r\ge 2$ and $n_1,\ldots,n_r\ge 2$. If there exists a non-trivial 
intersecting family $\mathcal{F} \subseteq [n_1] \times \cdots \times [n_r]$, then there exists a maximum-size non-trivial intersecting family  $\mathcal{F}' \subseteq [n_1] \times \cdots \times [n_r]$ which is coordinate-wise shifted.
\end{theorem}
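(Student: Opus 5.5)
Throughout, ``non-trivial'' means that $\calF$ has no common vertex, i.e.\ $\bigcap\calF=\emptyset$. Among all maximum-size non-trivial intersecting families in $[n_1]\times\cdots\times[n_r]$ I would choose one, $\calF$, minimizing the potential $\Phi(\calF)=\sum_{F\in\calF}\sum_{i}F_i$. The goal is to show that $\calF$ is coordinate-wise shifted.

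First I would verify the two standard facts about a single shift $S_j^{(\ell)}$. It is injective on $\calF$, so $|S_j^{(\ell)}(\calF)|=|\calF|$. It also preserves the intersecting property, by the usual case check. Take $F,G\in\calF$ and suppose their images $F',G'$ are disjoint. If neither moved, or both moved, this contradicts $F\cap G\neq\emptyset$. So suppose only $F$ moved, with $F_\ell=j$. Since $F'$ and $G'=G$ are disjoint, $F$ and $G$ can agree only at $\ell$, so $G_\ell=j$. Because $G$ did not move, its $1$-version $G^*$ lies in $\calF$. Then $G^*$ and $F$ disagree at $\ell$ and agree with $F'$ and $G$ elsewhere, so they are disjoint, a contradiction. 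If some shift changes $\calF$, then $\Phi$ strictly decreases. So if the shifted family is still non-trivial, we contradict the choice of $\calF$.

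The only remaining danger is that $S_j^{(\ell)}(\calF)$ acquires a common vertex. A shift in coordinate $\ell$ leaves every other coordinate unchanged, so any common vertex $(m,x)$ with $m\ne\ell$ was already common in $\calF$. Hence the new common vertex is $(\ell,1)$. This forces every member of $\calF$ to have $\ell$-th entry in $\{1,j\}$, with both values occurring. Write $P_1,P_j\subseteq\prod_{m\neq\ell}[n_m]$ for the projections of the two layers. Members of different layers disagree at $\ell$, so $P_1,P_j$ are cross-intersecting, and $|\calF|=|P_1|+|P_j|$.

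In this bad case I would use the union/intersection exchange. Replace $(P_1,P_j)$ by $(P_1\cup P_j,\;P_1\cap P_j)$. The sizes sum to the same total. Cross-intersection persists: an element of $P_1\cup P_j$ meets every element of $P_1\cap P_j$, since the latter lies in both sets. The new family still contains both layers provided $P_1\cap P_j\neq\emptyset$. It gains no common vertex outside coordinate $\ell$, since the union of the layers' projections is unchanged. It also strictly decreases $\Phi$ unless $P_j\subseteq P_1$, and $P_j\subseteq P_1$ is exactly the statement that $S_j^{(\ell)}$ fixes $\calF$. This again contradicts minimality.

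The main obstacle is the degenerate subcase $P_1\cap P_j=\emptyset$, where every shift or exchange collapses the family into the star at $(\ell,1)$. There I would try to show that $\calF$ is not of maximum size. Since $P_1,P_j$ are disjoint and cross-intersecting, their sizes are constrained. I would compare $|P_1|+|P_j|$ with the explicit non-trivial Hilton--Milner-type construction. Alternatively, I would add a member of the form $(\dots,1,\dots)$ with projection $z\in P_j$ chosen to meet all of $P_j$, which is possible after shifting the other coordinates. Either route would show that such an $\calF$ cannot be extremal, and that is where I expect the real work to lie.
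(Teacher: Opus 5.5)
The paper gives no proof of this statement; it quotes it from Kwan--Sudakov--Vieira. So your argument has to stand on its own, and it does not. What you actually establish is the routine part: a shift preserves size and the intersecting property, lowers $\Phi$, and can only create the common vertex $(\ell,1)$. The one case that carries all the content is left open.

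Moreover, that case is not a subcase. Write $(p,j)$ for the vector with entry $j$ in coordinate $\ell$ and $p$ elsewhere. Then $(\ell,1)$ can be common to $S_j^{(\ell)}(\calF)$ only if every $(p,j)\in\calF$ actually moves, i.e.\ $(p,1)\notin\calF$. So $P_1\cap P_j=\emptyset$ holds automatically in your bad case. In fact, in the two-layer situation $S_j^{(\ell)}(\calF)$ is exactly $\bigl((P_1\cup P_j)\times\{1\}\bigr)\cup\bigl((P_1\cap P_j)\times\{j\}\bigr)$. Your union/intersection exchange is therefore the shift itself, and the paragraph devoted to it treats an empty case. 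Your ``degenerate subcase'' is the whole difficulty: these are exactly the shift-resistant families.

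Worse, your plan for that case targets a false claim. In $[2]^3$ let $\calF=\{(1,1,2),(1,2,1),(2,1,1),(2,2,2)\}$.
\begin{itemize}
\item It contains no complementary pair, so it is intersecting.
\item It has no common vertex.
\item $|\calF|=4$ is maximum, because $[2]^3$ splits into four complementary pairs.
\end{itemize}
Yet $S_2^{(1)}(\calF)=\{(1,1,2),(1,2,1),(1,1,1),(1,2,2)\}$ is a star, and by symmetry the same happens in every coordinate. So ``$\calF$ is not of maximum size'' cannot be proved. Comparing with one explicit construction cannot give a strict inequality either, since here the Hilton--Milner-type family also has size $4$.

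Your second route is impossible for every maximum $\calF$. Maximality forces $P_1=\{z: z\text{ meets every member of }P_j\}$, because any such $z\notin P_1$ could be added in the $1$-layer without creating a common vertex. Hence $P_1\cap P_j=\emptyset$ says precisely that no $z\in P_j$ meets all of $P_j$ (in the example, $P_2=\{(1,1),(2,2)\}$). Shifting the other coordinates first does not help, because they can be resistant too.

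What is missing is a global argument that uses $\Phi$-minimality rather than maximality. From a resistant maximum family you must either build a \emph{different} maximum non-trivial family of smaller potential (in the example, replace $(2,2,2)$ by $(1,1,1)$), or bound resistant families directly by the size of some shifted non-trivial family. The latter is the role Lemma~\ref{lem:FN-resistant} plays in Section~3.

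Part of this is easy. If the resistant coordinate has $n_\ell\ge3$, then $\calF$ lies in the product with $X_\ell$ replaced by $\{1,j\}$, and induction on $\sum_i n_i$ yields a shifted maximum family. But when every resistant coordinate has $n_\ell=2$ there is nothing to reduce, and that is exactly where your proposal supplies no argument.
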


We remark that the original definition and theorem of Kwan--Sudakov--Vieira are more general than the form stated above.  It applies to families of separated sets
\[
\mathcal{G}\subseteq \prod_{s=1}^p \binom{X_s}{k_s}
\]
for arbitrary positive integers $p,k_1,\ldots,k_p$, and uses the usual shifts inside each part. For our purposes, the simplest case stated above (i.e., $p = r$ and $k_1 = \cdots = k_r = 1$) suffices.


For a vector
$\mathbf{x}=(x_1,\ldots,x_r)$, define its \emph{projection} as 
\[
   P(\mathbf{x}):=\{i\in[r]:x_i=1\}.
\]
For a shifted family of tuples, projections reduce the problem to set systems.  

\begin{lemma}[Frankl--Nie~\cite{FranklNie2026}]
\label{lem:FN-projection}
For integers $r\ge 3$ and $1\le t\le r-2$, if $\mathcal{F} \subseteq [n_1] \times \cdots \times [n_r]$ is coordinate-wise shifted and non-trivial
$t$-intersecting, then the family of projections
\[
   P(\calF):=\{P(\mathbf{x}):\mathbf{x}\in\calF\}\subseteq 2^{[r]}
\]
is non-trivial $t$-intersecting as well.
\end{lemma}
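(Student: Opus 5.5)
We must prove Lemma \ref{lem:FN-projection}: if $\calF$ is coordinate-wise shifted and non-trivial $t$-intersecting, then $P(\calF)$ is $t$-intersecting and satisfies $|\bigcap P(\calF)|<t$. There are two separate claims, and I treat them in turn. The tool throughout is the standard consequence of being $\ell$-shifted: if $\mathbf{x}\in\calF$ and $x_\ell\neq 1$, then the vector $\mathbf{x}'$ obtained by setting coordinate $\ell$ to $1$ also lies in $\calF$. Otherwise the shift $S^{(\ell)}_{x_\ell}$ would move $\mathbf{x}$, contradicting $S^{(\ell)}_{x_\ell}(\calF)=\calF$. Iterating over coordinates, the family is closed under replacing any set of entries by $1$. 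In particular $\mathbf{x}\in\calF$ implies that every $\mathbf{z}$ with $P(\mathbf{z})\supseteq P(\mathbf{x})$ and $z_i=x_i$ for $i\notin P(\mathbf{z})$ lies in $\calF$.

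\emph{$t$-intersection of projections.} Suppose $\mathbf{x},\mathbf{y}\in\calF$ with $|P(\mathbf{x})\cap P(\mathbf{y})|<t$. Let $S$ be the set of coordinates $i$ with $x_i=y_i\neq1$. Replace by $1$ all coordinates of $\mathbf{x}$ outside $P(\mathbf{x})\cup P(\mathbf{y})$ other than those in $S$? That would increase agreement, which is the wrong direction. Instead, I would move $\mathbf{x}$ onto $1$ exactly at the coordinates in $P(\mathbf{y})\setminus P(\mathbf{x})$? That also increases agreement.

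So the trick must be the opposite. Among all pairs violating the condition, choose $\mathbf{x},\mathbf{y}$ with $|\agr(\mathbf{x},\mathbf{y})|$ minimal and $|P(\mathbf{x})|+|P(\mathbf{y})|$ maximal. If some $i\in S$ exists, set $x_i:=1$ to obtain $\mathbf{x}'\in\calF$. Since $y_i\ne1$, coordinate $i$ now disagrees while every other coordinate is unchanged. Hence $|\agr(\mathbf{x}',\mathbf{y})|=|\agr(\mathbf{x},\mathbf{y})|-1$, while $P(\mathbf{x}')\cap P(\mathbf{y})=P(\mathbf{x})\cap P(\mathbf{y})$. Repeating this step, we may reach $S=\emptyset$, at which point $\agr(\mathbf{x},\mathbf{y})=P(\mathbf{x})\cap P(\mathbf{y})$ has size $<t$, contradicting that $\calF$ is $t$-intersecting. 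This is the clean argument, and it needs only the closure property.

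\emph{Non-triviality.} I would show $|\bigcap P(\calF)|<t$, interpreting $\bigcap P(\calF)$ as the set-theoretic intersection $\bigcap_{\mathbf{x}\in\calF}P(\mathbf{x})$. This is the main obstacle, because $\calF$ being non-trivial gives fewer than $t$ fixed coordinates, yet a coordinate $i$ could be non-fixed while still containing $1$ in every member. I would argue as follows. Suppose $I:=\bigcap_{\mathbf{x}}P(\mathbf{x})$ has $|I|\ge t$. Every member then has entry $1$ on all of $I$, so every $i\in I$ is a fixed coordinate of $\calF$, giving $|\bigcap\calF|\ge|I|\ge t$. This contradicts non-triviality. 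The apparent worry dissolves: a coordinate on which all members equal $1$ is, by definition, fixed.

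I therefore expect no real obstacle. The only care needed is the closure property derived from shiftedness (every shift fixes $\calF$, so no member with $x_\ell=j\ne1$ can have its $1$-replacement missing), and the use of the hypothesis $t\le r-2$ is not even required beyond the statement's setting.
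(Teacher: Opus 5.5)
Your proof is correct. There is no internal proof to compare it with, because the paper does not prove this lemma; it cites Frankl--Nie. Your argument is therefore a valid self-contained substitute. It rests on the same mechanism the paper itself uses in the proof of Theorem~\ref{thm:downset}: coordinate-wise shiftedness means $\calF$ is closed under resetting any entry to $1$.

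\textbf{The $t$-intersection step.} Write $\agr(\mathbf{x},\mathbf{y})=(P(\mathbf{x})\cap P(\mathbf{y}))\cup S$, where $S=\{i:x_i=y_i\ne1\}$. Resetting $x_i$ to $1$ for some $i\in S$ removes $i$ from the agreement set and leaves $P(\mathbf{x})\cap P(\mathbf{y})$ unchanged, since $i\notin P(\mathbf{y})$. Once $S=\emptyset$, you contradict $t$-intersection. This also covers the case $\mathbf{x}=\mathbf{y}$: you end at the all-one vector compared with $\mathbf{x}$. So $|A|\ge t$ holds for every $A\in P(\calF)$, as the set-system definition requires.

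\textbf{The non-triviality step.} This is just the inclusion $\bigcap_{\mathbf{x}\in\calF}P(\mathbf{x})\subseteq\bigcap\calF$. For a non-empty shifted family it is actually an equality. A coordinate fixed at a value $c\ne1$ is impossible, because resetting that entry to $1$ would produce a member with a different value there.

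You are also right that $r\ge3$ and $t\le r-2$ play no role. For a final version, delete the two abandoned attempts at the start of the intersection argument. Also drop the extremal choice of the pair (minimal agreement, maximal $|P(\mathbf{x})|+|P(\mathbf{y})|$), since the direct iteration already suffices.
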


A major problem with the shifting operation is that it may decrease the covering number $\tau$; in particular, it might turn a non-trivial family into a trivial one. 
We say a non-trivial $t$-intersecting family $\mathcal{F}\subseteq X_1\times\dots \times X_r$ is \emph{$\ell$-shift-resistant} 
if there exists $x \in X_{\ell}$ such that $S^{(\ell)}_x (\mathcal{F})$ is trivial, i.e. $| \cap S^{(\ell)}_x (\mathcal{F})| = t$. 
Frankl and Nie also proved the following shift-resistant bound, which is used only in the symmetric
$t$-intersection theorem below.

\begin{lemma}[Frankl--Nie \cite{FranklNie2026}]
\label{lem:FN-resistant}
For integers  $n\ge 2$, $r\ge 3$ and $r-2\ge t\ge 1$, let  $\mathcal{F}\subseteq [n]^r$ be non-trivial
$t$-intersecting such that for every $1\le \ell\le r$, $\mathcal{F}$ is either $\ell$-shifted or
$\ell$-shift-resistant, and $\mathcal{F}$ is not
coordinate-wise shifted. Then
\[
|\mathcal{F}|\le (t+2)n^{r-t-1}-(t+1)n^{r-t-2}.
\]
\end{lemma}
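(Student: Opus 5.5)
The plan is to use the hypothesis ``not coordinate-wise shifted'' to find a coordinate $\ell$ at which $\mathcal F$ is not $\ell$-shifted. By assumption $\mathcal F$ is then $\ell$-shift-resistant. I would first pin down the structure this forces, and only then count.

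\textbf{Structure from resistance at $\ell$.} Fix $x$ with $\mathcal G:=S^{(\ell)}_x(\mathcal F)$ trivial, and let $T:=\bigcap\mathcal G$, so $|T|=t$. The shift only alters $\ell$-th entries. Hence every coordinate of $T\setminus\{\ell\}$ is already fixed in $\mathcal F$, and since $|\bigcap\mathcal F|<t$ we must have $\ell\in T$ and $\bigcap\mathcal F=T\setminus\{\ell\}$. Since $\mathcal F$ is not $\ell$-shifted, some member has $\ell$-th entry $x$ and is moved to $1$, so the common $\ell$-entry in $\mathcal G$ is $1$. Consequently every member of $\mathcal F$ has $\ell$-entry in $\{1,x\}$, and every member with entry $x$ is shifted. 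Relabel so that the entries on $T\setminus\{\ell\}$ are $1$, and put $R=[r]\setminus T$, so $|R|=q$. Then $\mathcal F=\mathcal F_1\cup\mathcal F_x$, both parts are nonempty, and their restrictions $\mathcal A,\mathcal B\subseteq[n]^R$ satisfy $|\mathcal F|=|\mathcal A|+|\mathcal B|$. Inside $\mathcal F_1$ or inside $\mathcal F_x$ any two members agree on all of $T$, so $t$-intersection there is automatic. Between the parts they disagree at $\ell$ and agree on the $t-1$ coordinates of $T\setminus\{\ell\}$. Therefore $\mathcal A$ and $\mathcal B$ must be cross-intersecting in $[n]^q$, with $\mathcal B\subseteq\mathcal A$ (because the shifted copies do not collide).

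\textbf{Using the hypothesis on the other coordinates.} The naive cross-intersecting bound $n^q-(n-1)^q+1$ is too weak, so the hypothesis on the remaining coordinates must enter here. For $j\in R$ I would show that $\mathcal F$ is $j$-shifted, or else run the same analysis at $j$. That analysis would give a second resistant coordinate and force $\bigcap\mathcal F=T\setminus\{\ell\}=T'\setminus\{j\}$ together with a two-valued structure at $j$; this very rigid case should be handled by direct counting. In the main case $\mathcal A$ and $\mathcal B$ are shifted in every coordinate of $R$. Then, as in Lemma~\ref{lem:FN-projection}, their projections $P(\mathcal A),P(\mathcal B)\subseteq 2^{R}$ are cross-intersecting upsets. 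Each set $Y$ is weighted by the number of vectors with that projection, namely $(n-1)^{q-|Y|}$, and $|\mathcal A|=\sum_{Y\in P(\mathcal A)}(n-1)^{q-|Y|}$, and similarly for $\mathcal B$.

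\textbf{The weighted count — the main obstacle.} I expect this step to be the hard part: bounding $\sum_{Y\in P(\mathcal A)}w(Y)+\sum_{Y\in P(\mathcal B)}w(Y)$ for cross-intersecting upsets with $P(\mathcal B)\subseteq P(\mathcal A)$ nonempty, and recovering the $t$-dependence of the target $(t+2)n^{q-1}-(t+1)n^{q-2}$. My reading is that this $t$-dependence does not come from $\mathcal A,\mathcal B$ alone. It must come from the interaction with the other $t-1$ fixed coordinates, for example when several coordinates of $T$ are themselves resistant, which forces different parts of $\mathcal F$ to agree in more than $t-1$ places. The target equals the size of $\{\mathbf y:\ |\agr(\mathbf y,\mathbf 1)\cap[t+2]|\ge t+1\}$, so I would compare with this configuration. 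I would try a single-element exchange on $2^{R}$: pick $i\in R$ and compare the families split by $i\in Y$ or $i\notin Y$, showing that the optimum puts $\mathcal A$ as ``contains $i$'' and $\mathcal B$ as ``contains $i$'' plus one extra coordinate. Evaluating that configuration should give the claimed bound.
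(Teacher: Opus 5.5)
Your analysis at $\ell$ is right up to its last inference, and that inference is where the argument breaks. You correctly show that every member of $\mathcal F_x$ is moved by $S^{(\ell)}_x$. By the definition of the shift, this means its copy with $\ell$-entry $1$ is \emph{not} in $\mathcal F$. So the restrictions satisfy $\mathcal A\cap\mathcal B=\emptyset$, not $\mathcal B\subseteq\mathcal A$. The inclusion is what holds for an $\ell$-shifted family; combined with the true disjointness it would force $\mathcal B=\emptyset$.

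Without disjointness your plan cannot close. In your ``main case'' the constraints you record are: cross-intersecting, shifted in every coordinate of $R$, and $\emptyset\ne\mathcal B\subseteq\mathcal A$. These are met by $\mathcal B=\{\mathbf 1\}$ and $\mathcal A=\{\mathbf a\in[n]^R:a_i=1\text{ for some }i\}$. That gives $n^q-(n-1)^q+1$, which exceeds the target, e.g.\ $3440>2800$ for $t=1$, $q=4$, $n=10$. So the weighted count you call the main obstacle is false as posed, not merely hard.

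Your guess about where the $t$-dependence comes from is also off. By your own first step, every non-shifted coordinate lies outside $\bigcap\mathcal F$. Hence the $t-1$ fixed coordinates are shifted, constant, and play no role, and the $t$-dependence of the bound is slack.

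With disjointness in hand, the case split inverts. Suppose $\mathcal F$ were $j$-shifted for all $j\in R$. Replacing $R$-entries by $1$ one at a time inside $\mathcal F_1$ and inside $\mathcal F_x$ would put $\mathbf 1_R$ into both $\mathcal A$ and $\mathcal B$, contradicting $\mathcal A\cap\mathcal B=\emptyset$. So your main case is vacuous.

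Hence some $k\in R$ is not $k$-shifted, so $\mathcal F$ is $k$-shift-resistant. Your first step applied at $k$ then shows that all $k$-entries of members of $\mathcal F$ lie in a two-element set $\{1,x_k\}$. Now $\mathcal A$ and $\mathcal B$ are disjoint subsets of $\{\mathbf a\in[n]^R:a_k\in\{1,x_k\}\}$, so $|\mathcal F|=|\mathcal A|+|\mathcal B|\le 2n^{q-1}$. Finally, $2n^{q-1}\le(t+2)n^{q-1}-(t+1)n^{q-2}$, because the difference is $n^{q-2}\bigl(t(n-1)-1\bigr)\ge 0$.

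The ``rigid case'' you deferred is therefore the entire proof, and no optimization over $2^R$ is needed. (The paper itself gives no proof of this lemma; it is quoted from Frankl--Nie.)
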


In order to prove Theorem \ref{thm:iota-symmetric}, we shall use the following \(p\)-biased consequence of the weighted non-trivial \(t\)-intersection theorem of Bey and Engel \cite{BeyEngel2000}. We state it in a form adapted to the product measure used in this paper.
For \(0<p\le 1\) and \(\mathcal U\subseteq 2^{[r]}\), define the
\emph{\(p\)-biased measure} of \(\mathcal U\) by
\[
    \mu_p(\mathcal U)
    =
    \sum_{A\in\mathcal U} p^{|A|}(1-p)^{r-|A|}.
\]

\begin{theorem}[Bey--Engel~\cite{BeyEngel2000}]
\label{thm:BE}
Let $0<p\le 1/2$ and $1\le t\le r-2$.  Let $\calU\subseteq 2^{[r]}$ be non-trivial and $t$-intersecting, meaning that
\[
   |A\cap B|\ge t\quad\text{for all }A,B\in\calU,
   \qquad
   \left|\bigcap_{A\in\calU}A\right|<t.
\]
For $1\le i\le \lfloor (r-t)/2\rfloor$, define the Frankl families
\[
   \calS_i:=\{A\subseteq[r]: |A\cap[t+2i]|\ge t+i\}.
\]
Define the Hilton--Milner type family
\[
   \calH_{r,t}:=
   \{A\subseteq[r]:[t]\subseteq A,\, A\cap\{t+1,\ldots,r\}\ne\emptyset\}
   \cup
   \{[r]\setminus\{h\}:h\in[t]\}.
\]
Then
\[
   \mu_p(\calU)
   \le
   \max\left\{
      \mu_p(\calH_{r,t}),\,
      \mu_p(\mathcal{S}_i):1\le i\le \left\lfloor\frac{r-t}{2}\right\rfloor
   \right\}.
\]
\end{theorem}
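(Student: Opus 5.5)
The statement is a cited $p$-biased form of the Bey--Engel theorem, so the natural route is to re-run the Ahlswede--Khachatrian method directly on the weighted cube $2^{[r]}$.

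A quick transfer from the uniform theorem does not suffice, and it is worth seeing why. Lift $\calU$ to $\calG=\{F\in\binom{[N]}{k}:F\cap[r]\in\calU\}$ with $k/N\to p$. Then $\calG$ is $t$-intersecting and non-trivial, and $|\calG|/\binom Nk\to\mu_p(\calU)$. But the uniform Hilton--Milner family on $[N]$ has normalized size tending to $p^t$. This is the full star of $[t]$, which is strictly larger than $\mu_p(\calH_{r,t})$. So the finiteness of the ground set $[r]$ must be used inside the argument, not only in a limit.

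\textbf{Step 1: reduce to compressed upsets.} Since $\mu_p$ is monotone, we may assume $\calU$ is maximal, hence an upset. Apply the $(i,j)$-shifts $S_{ij}$ on $2^{[r]}$, with $i<j$. These preserve $\mu_p$ and $t$-intersection.

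A shift may destroy non-triviality. Follow Frankl's device for this. Shift only while the result stays non-trivial. If at some stage every available $S_{ij}$ would make $\calU$ trivial, then each such shift produces a family with $|\bigcap|=t$. This forces $\calU$ into a rigid shape: every member contains a fixed $(t-1)$-set together with one of $i,j$, and most members contain both. Then bound $\mu_p(\calU)$ directly by $\mu_p(\calH_{r,t})$ or $\mu_p(\calS_1)$, using $p\le 1/2$.

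Otherwise we reach a left-compressed, non-trivial, $t$-intersecting upset. By a further permutation of coordinates we may assume it is $\calH_{r,t}$-like or contains $[t+1]$-type structure.

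\textbf{Step 2: generating sets and push--pull.} Work with the minimal generating family $g(\calU)$ and its extent $s=\max\{\max G:G\in g(\calU)\}$. Adapt the Ahlswede--Khachatrian comparison lemma to $\mu_p$. Let $\calG_s$ be the generators with maximum element $s$, and split them by size.

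Replacing $\calG_s$ by the sets $G\setminus\{s\}$ of one size class gives a $t$-intersecting family. Its $\mu_p$ changes by an explicit amount proportional to $(1-p)/p$ versus $p/(1-p)$. For $p\le 1/2$, one of the two possible modifications does not decrease $\mu_p$. Iterate until $s\le t+2i$ for the extremal index, or until the generating family stabilizes.

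Non-triviality must be preserved at each push. When a push would create a common $t$-set, stop. The stopping configuration is exactly $[t]$ together with sets meeting $[t+1,r]$, plus the $t$ sets $[r]\setminus\{h\}$, that is, $\calH_{r,t}$. Otherwise the process terminates at some $\calS_i$.

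\textbf{Step 3: compare the candidates.} Compute $\mu_p(\calS_i)$ and $\mu_p(\calH_{r,t})$ explicitly to conclude that the maximum is attained among them. Note that $\calS_i$ is non-trivial for $i\ge1$ and needs $t+2i\le r$, which gives the range $i\le\lfloor(r-t)/2\rfloor$.

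\textbf{Main obstacle.} The main difficulty is Step 2's handling of non-triviality during push--pull in a finite cube. Because $r$ is fixed, the Hilton--Milner candidate arises only as a boundary case of the compression, and one must check that no intermediate family beats both $\calH_{r,t}$ and all $\calS_i$. I would first verify the case $t=1$ by hand, where the candidates are the Hilton--Milner family and the triangle-type family $\calS_1$, and use it as a template for general $t$.
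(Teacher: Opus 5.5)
\textbf{Verdict: the proposal has a genuine gap.} The paper does not prove this statement itself. It cites Bey--Engel's size-weighted non-trivial theorem and only checks that $w_j=\alpha^{-j}$, with $\alpha=(1-p)/p\ge1$, gives $\mu_p=(1-p)^rW$. You instead rerun the Ahlswede--Khachatrian argument, and the step carrying the non-trivial content is only asserted.

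The two-way comparison for a pair of generator classes $g^*_i,g^*_{s+t-i}$ with $2i\neq s+t$ is fine: if both moves lost measure you would get $\alpha^2<1$.

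The problem is non-triviality. For a left-compressed upset, non-triviality means $[r]\setminus\{t\}\in\calU$. Pushing never destroys this, but deleting the partner class can. Your claim that a family blocked in this way ``is exactly $\calH_{r,t}$'' is false. Take $t=1$, $r\ge6$, $0<p<1/2$ and
$\calU_0=\{A:1\in A,\ A\cap\{2,3,4,5\}\neq\emptyset\}\cup\{A:\{2,3,4,5\}\subseteq A\}$.
\begin{itemize}
\item It is a left-compressed, non-trivial, intersecting upset with $s=5$ and $g^*=\{\{1,5\},\{2,3,4,5\}\}$.
\item Pushing $\{1,5\}$ and deleting $\{2,3,4,5\}$ yields the star of $1$, which is trivial.
\item Pushing $\{2,3,4,5\}$ and deleting $\{1,5\}$ changes $\mu_p$ by $p^3(1-p)^2-p^2(1-p)^3<0$.
\end{itemize}
So your procedure halts at $\calU_0$, which is neither $\calH_{r,1}$ nor any $\calS_i$, and it is not optimal. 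Since your argument uses optimality only through these moves, being stuck cannot identify $\calH_{r,t}$.

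The same thing happens in general. The families $\{A:[t]\subseteq A,\ A\cap B\neq\emptyset\}\cup\{A:|A\cap[t]|\ge t-1,\ B\subseteq A\}$ with $B=\{t+1,\ldots,t+b\}$ interpolate between $\calS_1$ ($b=2$) and $\calH_{r,t}$ ($b=r-t$). Their measures satisfy $f(b+1)-f(b)=p^{t+1}(1-p)^b(1-t\alpha^{2-b})$. This changes sign at most once, from non-positive to positive. So these families are dominated only because $f$ peaks at an endpoint, and reaching $b=r-t$ requires increasing $s$, which none of your moves does.

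What is missing is a lemma of this form: every left-compressed, non-trivial, $t$-intersecting upset in which each non-decreasing push--pull destroys non-triviality has $\mu_p$ at most the right-hand side of the theorem. That is the substantive part of the Ahlswede--Khachatrian non-trivial theorem and of Bey--Engel's weighted version.

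Step~1 has a smaller gap of the same kind: ``bound $\mu_p(\calU)$ directly'' is not an argument. What the blocked shift actually gives is a common $(t-1)$-set $T'$ contained in all members, each of which also meets $\{i,j\}$. Hence
$\mu_p(\calU)\le p^{t+1}+p^t(1-p)\bigl(\mu_p(\mathcal X)+\mu_p(\mathcal Y)\bigr)$,
where $\mathcal X,\mathcal Y$ are the traces, on the remaining $r-t-1$ points, of the members containing exactly one of $i,j$. These two families are non-empty (by non-triviality) and cross-intersecting. To finish you need a biased Hilton--Milner-type bound for such pairs; this is where $\mu_p(\calH_{r,t})$ appears, and the proposal gives none.

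As written, this is a plan for re-deriving Bey--Engel rather than a proof. Either cite their weighted theorem with the normalization above, as the paper does, or supply these two comparison lemmas.
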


\begin{remark} Theorem~\ref{thm:BE} is the \(p\)-biased specialization of a
more general size-dependent weighted theorem of Bey and Engel.  In that
setting one fixes non-negative weights \(w_0,\ldots,w_r\) and considers
\[
    W(\mathcal V)
    :=
    \sum_{A\in\mathcal V} w_{|A|}
    \qquad
    (\mathcal V\subseteq 2^{[r]}).
\]
Bey--Engel's non-trivial \(t\)-intersection theorem gives the corresponding maximum, under their hypotheses, in terms of their weighted Frankl and Hilton--Milner type candidates.
The form used here is obtained by taking the weight
\[
    w_j=\alpha^{-j},
    \qquad
    \alpha=\frac{1-p}{p}.
\]
Since \(0<p\le 1/2\), we have \(\alpha\ge 1\), and for every
\(\mathcal V\subseteq 2^{[r]}\),
\[
    \mu_p(\mathcal V)
    =
    \sum_{A\in\mathcal V}p^{|A|}(1-p)^{r-|A|}
    =
    (1-p)^r\sum_{A\in\mathcal V}\alpha^{-|A|}
    =
    (1-p)^r W(\mathcal V).
\]
The factor \((1-p)^r\) is independent of \(\mathcal V\).  Hence
maximizing \(p\)-biased measure is equivalent to maximizing the
Bey--Engel size-dependent weight with \(w_j=\alpha^{-j}\).  After this
normalization and the harmless relabelling of the ground-set size as
\(r\), the weighted theorem gives exactly the form stated in
Theorem~\ref{thm:BE}.  
\end{remark}

\section[Symmetric all-n formula]{The symmetric all-$n$ formula for $\iota_0(t,n;r)$}

In this section, we prove Theorem \ref{thm:iota-symmetric} using Theorem \ref{thm:BE}. Let  $r\ge3$, $t\le r-2$ and $n\ge 2$ be positive integers and let $q=r-t$.  We first prove the lower bound.  Write $T=[t]$ and $Q=\{t+1,\ldots,r\}$.  Let
\[
   \calW_1:=\{\mathbf{x}\in[n]^r:x_i=1\text{ for all }i\in T,\text{ and }x_s=1\text{ for some }s\in Q\},
\]
and
\[
   \calW_2:=
   \bigcup_{h\in T}\{\mathbf{x}\in[n]^r:x_i=1\text{ for all }i\ne h,\ x_h\ne1\}.
\]
Set $\calW:=\calW_1\cup\calW_2$.  Clearly $\left|\bigcap \calW\right|<t$ and 
\[
   |\calW|=n^q-(n-1)^q+t(n-1)=W_{r,t}(n).
\]
Now we show $\calW$ is $t$-intersecting. Indeed, if two vectors both lie in $\calW_1$, they agree on all $t$
coordinates of $T$.  If one lies in $\calW_1$ and the other in $\calW_2$, they agree on at least
$t-1$ coordinates of $T$ and also on one coordinate of $Q$ where the vector in $\calW_1$ has value
$1$.  If both lie in $\calW_2$, they differ from the all-$1$ vector in at most two coordinates, and
therefore agree in at least $r-2\ge t$ coordinates.  Thus $\calW$ is a non-trivial $t$-intersecting family, and then 
\[
 \iota_0(t,n;r)\ge W_{r,t}(n). 
\]

For $1\le i\le\lfloor q/2\rfloor$, define the following product-space Frankl family, equivalently a Hamming-ball-type family,
\[
   \calA_i:=
   \left\{\mathbf{x}\in[n]^r:
   \left|\{s\in[t+2i]:x_s\ne1\}\right|\le i
   \right\}.
\]
Any two members of $\calA_i$ have at least
\[
   (t+2i)-i-i=t
\]
coordinates among $[t+2i]$ where both are equal to $1$.  Hence $\calA_i$ is $t$-intersecting.  Since
$i\ge1$, no coordinate is fixed in all members of $\calA_i$, so it is non-trivial.  Counting by the
number $j$ of non-$1$ coordinates inside $[t+2i]$ gives
\[
   |\calA_i|=n^{q-2i}\sum_{j=0}^i\binom{t+2i}{j}(n-1)^j=A_i(t,n;r).
\]
Thus the lower bound follows.

For the upper bound, let $\calF\subseteq[n]^r$ be non-trivial and $t$-intersecting.  
We iteratively apply coordinate-wise shifts that do not destroy non-triviality until no further such shift is possible. 
This process terminates because each non-trivial shift strictly decreases the sum of all coordinate values over the family.
Since each shift preserves cardinality and $t$-intersection, the terminal family
$\calF^\ast$ satisfies
\[
   |\calF^\ast|=|\calF|,
   \qquad
   \left|\bigcap \calF^\ast\right|<t.
\]
For each coordinate, $\calF^\ast$ is either shifted in that coordinate or is shift-resistant in the sense of Lemma \ref{lem:FN-resistant}.  
If $\calF^\ast$ is not coordinate-wise shifted, Lemma \ref{lem:FN-resistant} gives
\[
   |\calF|
   =|\calF^\ast|
   \le (t+2)n^{q-1}-(t+1)n^{q-2}
   =A_1(t,n;r),
\]
which is already at most the claimed maximum.  

Otherwise, $\calF^\ast$ is coordinate-wise shifted.
Hence, we may assume without loss of generality that $\calF$ is coordinate-wise shifted.

Let
\[
   \calU:=P(\calF)=\{P(\mathbf{x}):\mathbf{x}\in\calF\}\subseteq2^{[r]}.
\]
By Lemma \ref{lem:FN-projection}, $\calU$ is non-trivial and $t$-intersecting.  For each
$A\in\calU$, there are exactly $(n-1)^{r-|A|}$ vectors in $[n]^r$ having projection $A$, and then 
\[
   |\calF|\le\sum_{A\in\calU}(n-1)^{r-|A|}.
\]
Put $p=1/n$.  Since $n\ge2$, we have $0<p\le1/2$. It follows from $(n-1)^{r-|A|}=n^r p^{|A|}(1-p)^{r-|A|}$ that 
\[
   |\calF|\le n^r\mu_p(\calU).
\]
By Theorem \ref{thm:BE},
\[
   \mu_p(\calU)
   \le
   \max\left\{
      \mu_p(\calH_{r,t}),\,
      \mu_p(\calS_i):1\le i\le\left\lfloor\frac q2\right\rfloor
   \right\}.
\]
Thus, to complete the proof, it remains only to compute these measures.

For $\calS_i=\{A:|A\cap[t+2i]|\ge t+i\}$, write
$j=|[t+2i]\setminus A|$.  Then $0\le j\le i$ and
\[
   \mu_p(\calS_i)=
   \sum_{j=0}^i\binom{t+2i}{j}p^{t+2i-j}(1-p)^j.
\]
Multiplying by $n^r$ and using $p=1/n$ gives
\[
   n^r\mu_p(\calS_i)
   =n^{q-2i}\sum_{j=0}^i\binom{t+2i}{j}(n-1)^j
   =A_i(t,n;r).
\]

For $\calH_{r,t}$, its first part
\[
   \{A:[t]\subseteq A,\ A\cap Q\ne\emptyset\},\qquad Q=\{t+1,\ldots,r\},
\]
has measure
\[
   p^t\bigl(1-(1-p)^q\bigr),
\]
so after multiplying by $n^r$ contributes
\[
   n^q-(n-1)^q.
\]
The second part consists of the $t$ sets $[r]\setminus\{h\}$ with $h\in[t]$.  Each has measure
$p^{r-1}(1-p)$, so after multiplying by $n^r$ contributes $n-1$.  Thus
\[
   n^r\mu_p(\calH_{r,t})=n^q-(n-1)^q+t(n-1)=W_{r,t}(n).
\]
Combining these estimates gives
\[
   |\calF|
   \le
   \max\left\{W_{r,t}(n),A_i(t,n;r):1\le i\le\left\lfloor\frac q2\right\rfloor\right\}.
\]
Together with the lower bound, this proves the theorem.


\section[Downset reduction]{A general downset reduction for $m_0(1,n_1,\ldots,n_r)$}

\begin{proof}[Proof of Theorem \ref{thm:downset}]
By Theorem \ref{thm:ksv-shifted}, there is a maximum non-trivial intersecting family $\calF\subseteq[n_1]\times\cdots\times[n_r]$
that is coordinate-wise shifted. Define
\[
   \calU:=P(\calF)=\{P(\mathbf{x}):\mathbf{x}\in\calF\}\subseteq2^{[r]}.
\]
Then $\calU$ is intersecting and $\bigcap\calU=\emptyset$ by Lemma \ref{lem:FN-projection}.
We next prove projection-saturation.  Suppose $A\in\calU$ and $\mathbf{x}$ is any vector with $P(\mathbf{x})=A$.
If $\mathbf{x}\notin\calF$, then adding $\mathbf{x}$ to $\calF$ preserves intersection: for every $\mathbf{y}\in\calF$, the sets
$A=P(\mathbf{x})$ and $P(\mathbf{y})$ intersect, so $\mathbf{x}$ and $\mathbf{y}$ agree in a coordinate where both values are $1$.
Adding a vector cannot create a common fixed coordinate.  Hence non-triviality is also preserved, contradicting maximality.  Therefore
\[
   \calF=\{\mathbf{x}:P(\mathbf{x})\in\calU\},
\]
and consequently
\begin{equation}\label{can-calF}
   |\calF|=\sum_{A\in\calU}\prod_{i\notin A}(n_i-1).
\end{equation}

Set $\mathcal{D}:=\{[r]\setminus A:A\in\calU\}$. We claim that $\mathcal{D}$ is available. The condition $\bigcap\calU=\emptyset$ gives $\bigcup \mathcal{D}=[r]$. Since $\calU$ is intersecting, no two sets of $\mathcal{D}$ have union $[r]$.  It suffices to show $\mathcal{D}$ is a downset. Otherwise, there would exist $[r]\setminus A\in \mathcal{D}$ and $[r]\setminus B\subseteq [r]\setminus A$ such that $[r]\setminus B\notin \mathcal{D}$. This means that $A\subseteq B$, $A\in\calU$ and $B\notin\calU$. 
Choose $\mathbf{x}\in\calF$ with $P(\mathbf{x})=A$. Since $\calF$ is coordinate-wise shifted, replacing any coordinate of $\mathbf{x}$ outside $A$ by $1$ keeps the resulting vector in $\calF$; iterating over the coordinates in $B\setminus A$ gives a member of $\calF$ whose projection is $B$, a contradiction.

Recall that for $X\in [r]$, $a_X=\prod_{i\in X}(n_i-1)$. By \eqref{can-calF}, we have  that 
\[
|\calF|\le \max\limits_{\mathcal{D}\in\mathfrak D_r}\sum\limits_{X\in \mathcal{D}}a_X.
\]
Conversely, let $\mathcal{D}\in\mathfrak D_r$ and put
\[
   \calU:=\{[r]\setminus X:X\in \mathcal{D}\}.
\]
Then $\bigcap \calU=\emptyset$, and $\calU$ is intersecting because
$X\cup Y\ne[r]$ for all $X,Y\in \mathcal{D}$.  
Define
\[
   \calF_\mathcal{D}:=\{\mathbf{x}\in[n_1]\times\cdots\times[n_r]:P(\mathbf{x})\in\calU\}.
\]
Then $\calF_\mathcal{D}$ is intersecting, since any two projection sets in $\calU$ intersect and
the corresponding vectors agree with value $1$ in a common coordinate.  
It remains to check that $\calF_\mathcal{D}$ has no common vertex.  
Since $\mathcal{D}$ is a downset, we have $\emptyset\in \mathcal{D}$, and hence $[r]\in\calU$.  
Thus $\calF_\mathcal{D}$ contains the all-one vector, so no vertex with value different from $1$ can belong to every member of $\calF_\mathcal{D}$.  
On the other hand, since $\bigcup_{X\in \mathcal{D}}X=[r]$, for each $i\in[r]$ there exists $X\in \mathcal{D}$ with $i\in X$.  
Then $A=[r]\setminus X$ belongs to $\calU$ and does not contain $i$.  
Hence $\calF_\mathcal{D}$ contains a vector whose $i$th coordinate is not equal to $1$.  
Therefore the vertex $1$ of $X_i$ is not common to all members of $\calF_\mathcal{D}$ either.  
Since this holds for all $i\in [r]$, the family $\calF_\mathcal{D}$ has no common vertex.
Hence $\calF_\mathcal{D}$ is non-trivial and intersecting,  and
\[
   |\calF_\mathcal{D}|=\sum_{X\in \mathcal{D}}a_X.
\]
Taking the maximum over $\mathcal{D}$ completes the proof of Theorem \ref{thm:downset}.
\end{proof}


Next, we will give the exact values of $m_0(1,n_1,\ldots,n_r)$ for $r=4,5,6$. 
Before that, we define an \emph{intersecting graph} as a graph where every pair of edges shares a common vertex.

{
\renewcommand{\thesubsection}{\Roman{subsection}}
\setcounter{subsection}{0}

\subsection{The case $r=4$}

Let $\calI_4$ denote the set of all intersecting graphs on vertex set $[4]$, including the empty graph.

\begin{theorem}\label{thm:r4}
For $n_i\ge2$ and $a_i=n_i-1$ for $i\in [4]$, we have 
\[
   m_0(1,n_1,n_2,n_3,n_4)
   =
   1+\sum_{i=1}^4a_i+
   \max_{G\in\calI_4}\sum_{ij\in E(G)}a_i a_j.
\]
In particular, if $n_1\ge n_2\ge n_3\ge n_4$, then
\[
   m_0(1,n_1,n_2,n_3,n_4)
   =
   \max\left\{
      n_1(n_2+n_3+n_4-2),
      \,
      n_1n_2n_3-(n_1-1)(n_2-1)(n_3-1)+n_4-1
   \right\}.
\]
Equivalently, under this ordering of the part sizes, the final maximum is attained either by the star centered at $1$ or by the triangle on $\{1,2,3\}$.

\end{theorem}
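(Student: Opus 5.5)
By Theorem~\ref{thm:downset} with $r=4$, it suffices to describe all available families $\mathcal D\in\mathfrak D_4$ and maximize $\sum_{X\in\mathcal D}a_X$ over them. I would classify $\mathcal D$ by layers of the Boolean lattice.

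First, $\emptyset\in\mathcal D$ by (D1). Since $\bigcup\mathcal D=[4]$ by (D2), every $i\in[4]$ lies in some member of $\mathcal D$. Hence all four singletons lie in $\mathcal D$ by (D1).

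Next, $\mathcal D$ contains no set of size $\ge3$. Indeed, if $|X|=3$ and $i$ is the missing element, then $X\cup\{i\}=[4]$ with $\{i\}\in\mathcal D$, violating (D3). The same argument excludes $[4]$ itself. So the only freedom lies in the set $E$ of $2$-subsets in $\mathcal D$, which we view as a graph $G$ on $[4]$.

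Two $2$-sets have union $[4]$ exactly when they are disjoint. Thus (D3) restricted to pairs says precisely that $G$ is an intersecting graph. All other instances of (D3), such as a pair with a singleton or two singletons, are automatic because their unions have size at most $3$. Conversely, for every $G\in\calI_4$ the family $\{\emptyset\}\cup\{\{i\}:i\in[4]\}\cup E(G)$ is a downset satisfying (D2) and (D3). Therefore $\mathfrak D_4$ corresponds bijectively to $\calI_4$, and summing the weights gives the first formula
\[
1+\sum_{i=1}^4 a_i+\sum_{ij\in E(G)}a_ia_j .
\]

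For the explicit form, note that an intersecting graph on four vertices is contained either in a star $K_{1,3}$ or in a triangle. Since all weights are nonnegative, the maximum is attained by a full star or a full triangle.

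Assume $a_1\ge a_2\ge a_3\ge a_4$ and put $S=\sum_i a_i$.
\begin{itemize}
\item The star centred at $j$ has weight $a_j(S-a_j)$. Comparing with the star centred at $1$ gives
\[
a_1(S-a_1)-a_j(S-a_j)=(a_1-a_j)(S-a_1-a_j)\ge0,
\]
so the star at $1$ is the best star.
\item The triangle omitting vertex $k$ has weight $e_2$ of the remaining three $a_i$'s. This quantity is monotone in each argument, so omitting vertex $4$ is optimal.
\end{itemize}

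It remains to rewrite the two candidates in terms of the $n_i=a_i+1$.
\begin{itemize}
\item The star candidate factors as
\[
1+S+a_1(a_2+a_3+a_4)=(1+a_1)(1+a_2+a_3+a_4)=n_1(n_2+n_3+n_4-2).
\]
\item The triangle candidate is $1+a_1+a_2+a_3+a_1a_2+a_1a_3+a_2a_3+a_4$. Since $(1+a_1)(1+a_2)(1+a_3)$ expands to the first seven of these terms plus $a_1a_2a_3$, this equals
\[
(1+a_1)(1+a_2)(1+a_3)-a_1a_2a_3+a_4=n_1n_2n_3-(n_1-1)(n_2-1)(n_3-1)+n_4-1.
\]
\end{itemize}

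The main obstacle is just the exclusion of triples and the identification of (D3) with the intersecting-graph condition; everything after that is elementary.
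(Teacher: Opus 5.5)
Your proposal is correct and follows essentially the same route as the paper. You reduce via Theorem~\ref{thm:downset}, force all singletons into $\mathcal D$ and exclude sets of size at least $3$, identify the $2$-level with an intersecting graph, and then compare the full star at $1$ with the full triangle on $\{1,2,3\}$. Your identity $a_1(S-a_1)-a_j(S-a_j)=(a_1-a_j)(S-a_1-a_j)$ and the monotonicity argument for triangles make explicit the optimality of that star and that triangle, which the paper only asserts.
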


\begin{proof}
By Theorem \ref{thm:downset}, it suffices to optimize over $\mathcal{D}\in\mathfrak D_4$.
Since $\mathcal{D}$ is a downset and $\bigcup \mathcal{D}=[4]$, all singletons belong to $\mathcal{D}$.  Hence $\mathcal{D}$ cannot contain a $3$-set, because such a set together with the missing singleton would have union $[4]$.  
Also $[4]\notin \mathcal{D}$.  Thus $\mathcal{D}$ has only levels $0,1,2$.
Define a graph G on $[4]$ by
\[
   E(G):=\mathcal{D}\cap\binom{[4]}2.
\]
If $e,f\in E$ are disjoint, then $e\cup f=[4]$, contradicting the definition of $\mathcal{D}$. Hence $G\in\calI_4$.
Hence $E$ is an intersecting graph.  
Conversely, every intersecting graph $E$ yields an available family 
\[
   \mathcal{D}=\{\emptyset\}\cup\binom{[4]}1\cup E.
\]
Therefore
\[
   m_0(1,n_1,n_2,n_3,n_4)
   =1+\sum_{i=1}^4a_i+
    \max_{G\in\calI_4}\sum_{ij\in E(G)}a_i a_j.
\]

Assume now $n_1\ge n_2\ge n_3\ge n_4$.  
An intersecting graph on four vertices is contained in a star or in a triangle; since all edge weights are positive, an optimum is a full star or a full
triangle.  The largest star is centered at vertex $1$, giving contribution
\[
   a_1a_2+a_1a_3+a_1a_4,
\]
while the largest triangle is on $\{1,2,3\}$, giving contribution
\[
   a_1a_2+a_1a_3+a_2a_3.
\]
Adding $1+\sum_i a_i$ gives respectively
\[
   (1+a_1)(1+a_2+a_3+a_4)=n_1(n_2+n_3+n_4-2)
\]
and
\[
   n_1n_2n_3-a_1a_2a_3+a_4
   =n_1n_2n_3-(n_1-1)(n_2-1)(n_3-1)+n_4-1.
\]
The result follows.
\end{proof}

\subsection{The case $r=5$}

Let $\calI_5$ be the set of all intersecting graphs on $[5]$, including the empty graph.
For $e\in\binom{[5]}2$, write $\overline e:=[5]\setminus e$ and
\[
   \Delta_e:=a_{\overline e}-a_e.
\]

\begin{theorem} 
\label{thm:r5}
For $n_i\ge2$ and $a_i=n_i-1$ for $i\in [5]$, we have 
\[
   m_0(1,n_1,\ldots,n_5)
   =
   1+\sum_{i=1}^5a_i+
   \sum_{1\le i<j\le5}a_i a_j
   +
   \max_{G\in\calI_5}\sum_{e\in E(G)}\Delta_e.
\]
Equivalently, the final maximum may be taken over subgraphs of stars and triangles on $[5]$.
\end{theorem}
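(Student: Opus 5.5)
The plan is to apply Theorem~\ref{thm:downset} and, as in the case $r=4$, identify the structure of every available family $\mathcal{D}\in\mathfrak D_5$ level by level.

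First, because $\mathcal{D}$ is a downset with $\bigcup\mathcal{D}=[5]$, we have $\emptyset\in\mathcal{D}$ and all singletons lie in $\mathcal{D}$. Consequently $\mathcal{D}$ contains no $4$-set, since a $4$-set together with the missing singleton has union $[5]$. For the same reason $[5]\notin\mathcal{D}$. So $\mathcal{D}$ lives on levels $0,1,2,3$, and the only question is which pairs and which triples it contains. A triple can be written uniquely as $\overline e$ for a pair $e$. I would record the conditions (D3) imposes on pairs and triples:
\begin{itemize}
\item Two pairs never have union $[5]$, so (D3) places no restriction among pairs.
\item A pair $f$ and a triple $\overline e$ have union $[5]$ exactly when $f=e$.
\item Two triples $\overline e,\overline f$ have union $[5]$ iff $|\overline e\cap\overline f|=1$, iff $|e\cup f|=4$, iff $e\cap f=\emptyset$.
\end{itemize}
Thus, setting $E(G):=\{e\in\binom{[5]}2:\overline e\in\mathcal{D}\}$, condition (D3) says precisely that $G$ is an intersecting graph and that $e\notin\mathcal{D}$ for every $e\in E(G)$.

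Next I would check the downset condition for triples. If $\overline e\in\mathcal{D}$, its three sub-pairs must lie in $\mathcal{D}$. Each such sub-pair $f\subseteq\overline e$ is disjoint from $e$. Since $G$ is intersecting and $e\in E(G)$, this forces $f\notin E(G)$, so $f$ is not excluded by the previous step. Hence for each $G\in\calI_5$ the family
\[
\mathcal{D}_G:=\{\emptyset\}\cup\binom{[5]}1\cup\Bigl(\binom{[5]}2\setminus E(G)\Bigr)\cup\{\overline e:e\in E(G)\}
\]
is available. Its weight is
\[
1+\sum_i a_i+\sum_{i<j}a_ia_j+\sum_{e\in E(G)}(a_{\overline e}-a_e),
\]
which gives the lower bound. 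For the upper bound, any available $\mathcal{D}$ defines $G\in\calI_5$ as above, and by the first step $\mathcal{D}\subseteq\mathcal{D}_G$. Since all weights $a_X$ are positive, the weight of $\mathcal{D}$ is at most that of $\mathcal{D}_G$, and the formula follows.

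For the final sentence of the statement, I would note that an intersecting graph on $[5]$ is either a subgraph of a star or a subgraph of a triangle; this holds because two disjoint edges cannot both meet a third edge unless that edge joins them, and a short case check handles the rest. So the maximum may be taken over subgraphs of stars and triangles. Since the $\Delta_e$ may be negative, one keeps only the edges with $\Delta_e>0$ inside the chosen star or triangle. I expect the main care point to be the pair/triple compatibility bookkeeping in the first step, in particular verifying that the downset requirement for the triples is automatic rather than an extra constraint on $G$.
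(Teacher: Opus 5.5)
Your proof is correct and follows the paper's route: you reduce via Theorem~\ref{thm:downset} to levels $0$--$3$, read off the intersecting graph $G$ from the triples, verify that $\mathcal{D}_G$ is available, and compute its weight. The only difference is cosmetic: you bound an arbitrary available $\mathcal{D}$ through the containment $\mathcal{D}\subseteq\mathcal{D}_G$ together with positivity of the weights, whereas the paper uses optimality to force exactly one of $e,\overline e$ into $\mathcal{D}$.

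Your one-line reason that intersecting graphs lie in a star or a triangle is garbled, since an intersecting graph has no disjoint edges at all. The clean argument is this: three pairwise-meeting edges with no common vertex form a triangle, and any further edge meeting all three triangle edges must be one of them. The paper simply asserts this standard fact, so nothing is missing.
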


\begin{proof}
Let $\mathcal{D}\in\mathfrak D_5$ be optimal. As before, since $\mathcal{D}$ is a downset and
$\bigcup_{X\in \mathcal{D}}X=[5]$, all singletons belong to $\mathcal{D}$. Hence $\mathcal{D}$ contains no
$4$-set and no $5$-set. Thus only levels $0,1,2,3$ can occur.

For $e\in\binom{[5]}2$, we define a graph $G$ on $[5]$ by
\[
    E(G):=\left\{e\in\binom{[5]}2:\bar e\in \mathcal{D}\right\}.
\]
If $e,f\in E(G)$ were disjoint, then
\[
    \bar e\cup \bar f=[5],
\]
contradicting the defining condition of $\mathfrak D_5$. Hence $G\in\mathcal I_5$.

For every $e\in\binom{[5]}2$, the condition $X\cup Y\ne[5]$ forbids both $e$ and
$\bar e$ from belonging to $\mathcal{D}$. Conversely, since the original weights $a_X$ are
positive, optimality forbids both $e$ and $\bar e$ from being absent. Indeed, if
neither $e$ nor $\bar e$ belongs to $\mathcal{D}$, then adding the $2$-set $e$ preserves the
downset property, since all singletons already belong to $\mathcal{D}$. It also preserves the
condition $X\cup Y\ne[5]$, because the only set of size at most $3$ whose union with
$e$ is $[5]$ is $\bar e$, which is absent. Thus exactly one of $e$ and $\bar e$
belongs to $\mathcal{D}$. Therefore
\[
   \mathcal{D}=
   \{\emptyset\}\cup\binom{[5]}1\cup
   \left(\binom{[5]}2\setminus E(G)\right)
   \cup\{\bar e:e\in E(G)\}.
\]
Consequently,
\[
   \sum_{X\in \mathcal{D}}a_X
   =
   1+\sum_{i=1}^5a_i+\sum_{e\in\binom{[5]}2}a_e
   +\sum_{e\in E(G)}(a_{\bar e}-a_e).
\]
This gives the upper bound
\[
m_0(1,n_1,\ldots,n_5)
\le
1+\sum_{i=1}^5a_i+\sum_{1\le i<j\le5}a_ia_j
+\max_{G\in\mathcal I_5}\sum_{e\in E(G)}\Delta_e.
\]

Conversely, let $G\in\mathcal I_5$ and define
\[
\mathcal{D}_G=
\{\emptyset\}\cup\binom{[5]}1\cup
\left(\binom{[5]}2\setminus E(G)\right)
\cup\{\bar e:e\in E(G)\}.
\]
We claim that $\mathcal{D}_G\in\mathfrak D_5$. First, $\mathcal{D}_G$ is a downset. Since $\mathcal{D}_G$
contains $\emptyset$ and all singletons, the only nontrivial point is to check
the $2$-subsets of the $3$-sets $\bar e$ with $e\in E(G)$. Let
$f\in\binom{[5]}2$ and suppose $f\subseteq\bar e$ for some $e\in E(G)$. Then
$f\cap e=\emptyset$. Since $G$ is pairwise intersecting, we have
$f\notin E(G)$, and hence
\[
    f\in \binom{[5]}2\setminus E(G)\subseteq \mathcal{D}_G.
\]
Thus $\mathcal{D}_G$ is a downset.

It remains to verify the admissibility condition. Since $\mathcal{D}_G$ contains all
singletons, we have
\[
    \bigcup_{X\in \mathcal{D}_G}X=[5].
\]
Moreover, no two members of $\mathcal{D}_G$ have union $[5]$. Indeed, among sets of sizes at
most $3$, a full union can occur only from a complementary $2$--$3$ pair or from
two $3$-sets. A complementary $2$--$3$ pair cannot occur, because whenever
$\bar e\in \mathcal{D}_G$, the corresponding $2$-set $e$ has been deleted. If two $3$-sets
$\bar e,\bar f\in \mathcal{D}_G$ had union $[5]$, then $e\cap f=\emptyset$, contradicting
the fact that $G$ is pairwise intersecting. Hence $\mathcal{D}_G\in\mathfrak D_5$.

Therefore the upper bound is sharp, and
\[
   m_0(1,n_1,\ldots,n_5)
   =
   1+\sum_{i=1}^5a_i+\sum_{1\le i<j\le5}a_ia_j
   +\max_{G\in\mathcal I_5}\sum_{e\in E(G)}\Delta_e.
\]

Finally, any intersecting graph on five vertices is contained in a star or in a triangle.
The maximum $\max_{G\in\mathcal I_5}\sum_{e\in E(G)}\Delta_e$
may be checked over subgraphs of stars and triangles. 
Since the weights $\Delta_e$ may be negative, an optimal graph need not be a full star or a full triangle.
\end{proof}

\subsection{The case $r=6$}

Let $\calI_6$ be the set of all intersecting graphs on $[6]$, including the empty graph.
For $e\in\binom{[6]}2$, write $\overline e=[6]\setminus e$ and
\[
   \Delta_e:=a_{\overline e}-a_e.
\]
Set
\[
   B_2^{(6)}:=1+\sum_{i=1}^6a_i+
   \sum_{1\le i<j\le6}a_i a_j.
\]
For $G\in\calI_6$, define
\[
   \calA(G):=
   \left\{C\in\binom{[6]}3:\text{there is no }e\in E(G)\text{ with }e\subseteq C\right\}.
\]
The $3$-subsets of $[6]$ are paired by complementation. Define
\[
   \Phi(G):=
   \sum_{\{C,[6]\setminus C\}}
   \max\left(
      \{0\}\cup
      \{a_C:C\in\calA(G)\}\cup
      \{a_{[6]\setminus C}:[6]\setminus C\in\calA(G)\}
   \right),
\]
where each complementary pair is counted once.

\begin{theorem}
\label{thm:r6}
For $n_i\ge2$ and $a_i=n_i-1$ for $i\in [6]$, we have 
\[
   m_0(1,n_1,\ldots,n_6)
   =
   B_2^{(6)}+
   \max_{G\in\calI_6}
   \left(
      \sum_{e\in E(G)}\Delta_e+
      \Phi(G)
   \right).
\]
\end{theorem}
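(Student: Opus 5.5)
The plan is to apply Theorem~\ref{thm:downset} and show, as in the proofs of Theorems~\ref{thm:r4} and~\ref{thm:r5}, that an optimal available family $\mathcal{D}\in\mathfrak D_6$ is described by an intersecting graph $G$ together with a choice of at most one $3$-set from each complementary pair. I will prove the upper bound from an optimal $\mathcal{D}$ and the lower bound by constructing $\mathcal{D}_G$ from an arbitrary $G\in\calI_6$.

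\emph{Structure of an optimal $\mathcal{D}$.} By (D1) and (D2), every singleton lies in $\mathcal{D}$. Hence $\mathcal{D}$ contains no $5$-set (its union with the missing singleton is $[6]$) and not $[6]$. So only levels $0$ through $4$ occur, and I define
\[
   E(G):=\left\{e\in\binom{[6]}2:\overline e\in\mathcal{D}\right\}.
\]
\begin{enumerate}
\item Two $4$-sets $\overline e,\overline f$ have union $[6]$ exactly when $e\cap f=\emptyset$, so $G\in\calI_6$.
\item By (D3), $e$ and $\overline e$ are never both in $\mathcal{D}$. If neither is present, adding $e$ keeps $\mathcal{D}$ a downset, since all singletons are already in $\mathcal{D}$. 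It also keeps (D3): a member of size at most $4$ whose union with $e$ is $[6]$ must be $\overline e$, which is absent. Since $a_e>0$, optimality forces exactly one of $e,\overline e$ into $\mathcal{D}$.
\item For a $3$-set $C\in\mathcal{D}$, the downset property gives $e\not\subseteq C$ for every $e\in E(G)$, because such $e\notin\mathcal{D}$. Hence $C\in\calA(G)$.
\item Two $3$-sets have union $[6]$ only if they are complementary, so $\mathcal{D}$ contains at most one member of each complementary pair.
\end{enumerate}
Therefore
\[
   \sum_{X\in\mathcal{D}}a_X=B_2^{(6)}+\sum_{e\in E(G)}(a_{\overline e}-a_e)+\sum_{C\in\mathcal{D},\,|C|=3}a_C\le B_2^{(6)}+\sum_{e\in E(G)}\Delta_e+\Phi(G),
\]
which gives the upper bound.

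\emph{Construction for the lower bound.} Given $G\in\calI_6$, let $\mathcal{D}_G$ consist of the following sets:
\begin{itemize}
\item $\emptyset$ and all singletons;
\item the $2$-sets not in $E(G)$;
\item the $4$-sets $\overline e$ for $e\in E(G)$;
\item from each complementary pair of $3$-sets, a member of $\calA(G)$ of maximum weight, if one exists.
\end{itemize}
Its weight is exactly $B_2^{(6)}+\sum_{e\in E(G)}\Delta_e+\Phi(G)$. I then check the three conditions.

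\emph{Downset.} The $2$-subsets of a chosen $3$-set $C\in\calA(G)$ are not edges, so they are in $\mathcal{D}_G$. A $2$-subset $f$ of $\overline e$ with $e\in E(G)$ is disjoint from $e$, so $f\notin E(G)$ because $G$ is intersecting, and thus $f\in\mathcal{D}_G$.

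\emph{Forced $3$-sets.} The delicate point is that every $3$-subset $C$ of a $4$-set $\overline e\in\mathcal{D}_G$ must actually be chosen. First, $C\in\calA(G)$: any edge inside $C$ would be disjoint from $e$, contradicting that $G$ is intersecting. Second, $[6]\setminus C\supseteq e$ with $e\in E(G)$, so $[6]\setminus C\notin\calA(G)$. Thus $C$ is the unique member of its pair in $\calA(G)$, and since $a_C>0$ the maximum picks $C$. This consistency between the forced $3$-subsets and the pairwise maximum in $\Phi$ is the step I expect to need the most care.

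\emph{Union condition (D3).} Condition (D2) holds because all singletons are present. For (D3), a union equal to $[6]$ of two members of size at most $4$ can only arise in one of the following ways, each of which is excluded:
\begin{itemize}
\item a $2$-set with its complementary $4$-set, excluded since $e\notin\mathcal{D}_G$ when $\overline e\in\mathcal{D}_G$;
\item two complementary $3$-sets, excluded since at most one per pair is chosen;
\item a $3$-set $C$ with a $4$-set $\overline e$, which requires $e\subseteq C$ and is excluded because $C\in\calA(G)$;
\item two $4$-sets, excluded because $G$ is intersecting.
\end{itemize}
A $2$-set together with a $3$-set has at most $5$ elements, so it cannot cover $[6]$. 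Taking the maximum over $G\in\calI_6$ matches the upper bound and proves the theorem.
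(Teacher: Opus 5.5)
Your proposal is correct and follows the paper's proof essentially step for step. It uses the same encoding of $E(G)$ through the $4$-sets of an optimal $\mathcal{D}$, the same optimality argument forcing exactly one of $e,\overline e$ into $\mathcal{D}$, the same construction $\mathcal{D}_G$ including the check that $3$-subsets of present $4$-sets are forced into $H_G$, and the same $2$--$4$, $3$--$3$, $3$--$4$, $4$--$4$ case analysis. The only cosmetic difference is how you get $\mathcal{D}\cap\binom{[6]}3\subseteq\calA(G)$: you use downward closure (an edge $e\subseteq C$ would put both $e$ and $\overline e$ in $\mathcal{D}$), while the paper notes directly that $C\cup\overline e=[6]$; both arguments are valid.
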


\begin{proof}
Let $\mathcal{D}\in\mathfrak D_6$ be optimal.  Since $\mathcal{D}$ contains all singletons, it contains no $5$-set and no
$6$-set.  Thus every member of $\mathcal{D}$ has size at most $4$.

Define
\[
   E(G):=\{e\in\binom{[6]}2:\overline e\in \mathcal{D}\}.
\]
If $e,f\in E(G)$ are disjoint, then $\overline e\cup\overline f=[6]$, impossible.  Thus
$G\in\calI_6$.  As in the $r=5$ case, optimality forces
\[
   \mathcal{D}\cap\binom{[6]}2=\binom{[6]}2\setminus E(G),
\]
for the only obstruction to adding a missing $2$-set $e$ is the presence of its complementary $4$-set $\overline e$.
Thus
\[
  \mathcal{D}\cap\binom{[6]}4=\{\overline e:e\in E(G)\}.
\]
The total contribution from levels \(0,1,2,4\) is
\[
    1+\sum_{i=1}^6a_i
    +\sum_{e\in\binom{[6]}2\setminus E(G)}a_e
    +\sum_{e\in E(G)}a_{\overline e}
    =
    B_2^{(6)}+\sum_{e\in E(G)}(a_{\overline e}-a_e).
\]
It remains to bound the possible contribution from the third level. Put 
\[
H:=\mathcal{D}\cap\binom{[6]}3.
\] 
We first show that $H\subseteq\calA(G)$, if $C\in H$ and some $e\in E(G)$ satisfies $e\subseteq C$, then $C\cup\overline e=[6]$,
contradicting the conditions of $\mathfrak{D}_6$.  
Hence $H\subseteq\calA(G)$. Also $H$ cannot contain both $C$ and $[6]\setminus C$. Hence the total weight of $H$ is at most $\Phi(G)$. 

Since \(\Delta_e=a_{\overline e}-a_e\), we obtain
\[
    \sum_{X\in \mathcal{D}}a_X
    \le
    B_2^{(6)}+\sum_{e\in E(G)}\Delta_e+\Phi(G).
\]
Taking the maximum over \(G\in\mathcal I_6\) gives the upper bound
\[
    m_0(1,n_1,\ldots,n_6)
    \le
    B_2^{(6)}
    +
    \max_{G\in\mathcal I_6}
    \left(
        \sum_{e\in E(G)}\Delta_e+\Phi(G)
    \right).
\]

We now prove the reverse inequality. Fix \(G\in\mathcal I_6\). For each
complementary pair \(\{C, [6]\setminus C\}\) of \(3\)-sets, choose a member of maximum
weight among the members of this pair that lie in \(\calA(G)\); if neither member lies
in \(\calA(G)\), choose nothing. Let \(H_G\) be the family of chosen triples. Then
\[
    H_G\subseteq \calA(G),
    \qquad
    \sum_{C\in H_G}a_C=\Phi(G).
\]
Define
\[
    \mathcal{D}_G:=
    \{\emptyset\}
    \cup\binom{[6]}1
    \cup\left(\binom{[6]}2\setminus E(G)\right)
    \cup H_G
    \cup\{\overline e:e\in E(G)\}.
\]
We claim that \(\mathcal{D}_G\in\mathfrak D_6\).
First, $\mathcal{D}_G$ is a downset. Since it contains $\emptyset$ and all singletons, the only nontrivial points to check are the chosen $3$-sets and the present $4$-sets.
Let $C\in H_G$. Since $C\in \calA(G)$, no edge of $G$ is contained in $C$.
Therefore every $2$-subset of $C$ lies in $\binom{[6]}2\setminus E(G)\subseteq \mathcal{D}_G$.

Now let $\overline{e}$ be a present $4$-set, with $e \in E(G)$. Clearly, all $2$-subsets of $\overline{e}$ belong to $\binom{[6]}2\setminus E(G)$.
Let $C \subseteq \overline{e}$ be any $3$-subset. 
Since $C \subseteq \overline{e} = [6]\setminus e$, every $2$-subset of $C$ is disjoint from $e$.
Thus, by the intersection of $G$, we must have $C\in\mathcal{A}(G)$. 
On the other hand, the complementary $3$-set $[6]\setminus C$ contains $e$, and therefore $[6]\setminus C \notin \mathcal{A}(G)$, the construction of $H_G$ forces $C\in H_G$. 
This proves that $\mathcal{D}_G$ is a downset.

Second, since $\mathcal{D}_G$ has no member of size larger than $4$, the only possible ways for two members to have union $[6]$ are of types $2-4$, $3-3$, $3-4$, and $4-4$.
For a $2-4$ pair, every present $4$-set has the form $\overline{e}$ with
$e\in E(G)$. However, in the construction of $\mathcal{D}_G$, the set $e$ has already
been deleted. Hence no complementary $2-4$ pair occurs.

For a $3-3$ pair, two $3$-sets have union $[6]$ exactly when they are complementary. But $H_G$ contains at most one member from each complementary pair, so this cannot occur.

For a $3-4$ pair, let $C\in H_G$ and let $\overline{e}$ be a present $4$-set with $e\in E(G)$. If $C\cup\overline{e}=[6]$, then $e\subseteq C$. 
This contradicts $C\in \calA(G)$. Hence no $3-4$ pair has full union.

Finally, consider two present $4$-sets $\overline{e}$ and $\overline{f}$, where $e, f\in E(G)$. Their union is $[6]$ if and only if $e\cap f=\emptyset$, this contradicts the intersection of $G$.
Thus $X\cup Y\neq [6]$ for all $X, Y\in \mathcal{D}_G$.

Third, since $\mathcal{D}_G$ contains all singletons, we have 
\[
\bigcup_{X\in \mathcal{D}_G} X=[6].
\]
Consequently, $\mathcal{D}_G\in\mathfrak{D}_6$.

It remains to compute its weight. By construction,
\begin{align*}
\sum_{X\in \mathcal{D}_G} a_X
&=
1+\sum_{i=1}^{6} a_i
+\sum_{e\in \binom{[6]}{2}\setminus E(G)} a_e
+\sum_{C\in H_G} a_C
+\sum_{e\in E(G)} a_{\bar e}\\
&=B_2^{(6)}+\sum_{e\in E(G)}\Delta_e+\Phi(G).
\end{align*}

Since this construction works for every $G\in \mathcal I_6$, we obtain
\[
m_0(1,n_1,\ldots,n_6)
\ge B_2^{(6)}+\max_{G\in \mathcal I_6}\left(\sum_{e\in E(G)}\Delta_e+\Phi(G)\right).
\]

Together with the upper bound, this proves
\[
m_0(1,n_1,\ldots,n_6)=B_2^{(6)}+\max_{G\in \mathcal I_6}\left(\sum_{e\in E(G)}\Delta_e+\Phi(G)\right).
\]
The theorem follows.
\end{proof}

\section{Declaration on the use of AI}
The authors used generative AI tools to assist checking proofs, and improving exposition.

\end{document}